\newtheorem*{rep@theorem}{\rep@title}
\newcommand{\newreptheorem}[2]{%
	\newenvironment{rep#1}[1]{%
		\def\rep@title{#2 \ref{##1}}%
		\begin{rep@theorem}}%
		{\end{rep@theorem}}}
\numberwithin{equation}{section}
\newcommand{\N}{\mathbb{N}}                                     
\newcommand{\R}{\mathbb{R}}                                     
\newcommand{\Z}{\mathbb{Z}}                                     
\newcommand{\dd}{\mathrm{d}}                                    
\providecommand{\abs}[1]{\left\lvert #1 \right\rvert}           
\providecommand{\norm}[1]{\left\lVert #1 \right\rVert}          
\newcommand{\innerprod}[2]{\left\langle #1,\, #2 \right\rangle} 
\newcommand{\tr}[1]{\mathrm{Tr} \left( #1 \right)}              
\newcommand\restr[2]{{\left.\kern-\nulldelimiterspace #1 \vphantom{\big|} \right|_{#2}}} 
\DeclareMathOperator{\domain}{dom}
\DeclareMathOperator{\range}{range}
\newcommand{\Ex}{\mathbb{E}}
\renewcommand{\Pr}{\mathbb{P}}
\newcommand{\sigalg}{\mathcal{F}}
\newcommand{\setM}{\mathcal{M}}
\newcommand{\law}{\mathcal{L}}
\newcommand{\invmeas}{\pi} 
\newcommand{\tk}{p} 
\newcommand{\proj}{\Pi}
\newcommand{\reg}{\lambda} 
\newcommand{\bx}{\mathbf{x}} 
\newcommand{\bz}{\mathbf{z}} 
\newcommand{\risk}{{R}} 
\newcommand{\regrisk}{\risk_{\reg}} %
\newcommand{\empregrisk}{\risk_{\reg,\bz}}
\newcommand{\Fstar}{F_p} 
\newcommand{\FG}{F_\vecrkhs} 
\newcommand{\regF}{F_{\reg}} 
\newcommand{\regA}{A_{\reg}} 
\newcommand{\empregF}{F_{\reg, \bz}}
\newcommand{\empregA}{A_{\reg, \bz}}
\newcommand{\inrkhs}{{\mathscr{H}}} 
\DeclareMathOperator{\mmd}{MMD}
\newcommand{\infe}[1][]{
	\ifthenelse{\equal{#1}{}}{{\Phi}}{{\Phi}_{\scriptscriptstyle #1}}}
\newcommand{\outfe}[1][]{
	\ifthenelse{\equal{#1}{}}{{\Psi}}{{\Psi}_{\scriptscriptstyle #1}}}
\newcommand{\outreffe}[1][]{
	\ifthenelse{\equal{#1}{}}{{\Gamma}}{{\Gamma}_{\scriptscriptstyle #1}}}
\newcommand{\vecrkhs}{\mathscr{G}}
\newcommand{\veckernel}{K}
\newcommand{\inrkhsi}{i}
\newcommand{\vecrkhsi}{\mathcal{I}}
\newcommand{\vecrkhscov}{T}
\newcommand{\samplingop}{S_\bx}
\newcommand{\empvecrkhscov}{\vecrkhscov_{\bx}}
\newcommand{\bounded}[1]{\mathfrak{B}(#1)}
\newcommand{\HS}[1]{\mathrm{S_2}(#1)}
\newcommand{\comp}{\Xi}
\newcommand{\id}{\mathrm{Id}}
\newcommand{\idop}[1][]{
	\ifthenelse{\equal{#1}{}}{{\mathcal{I}}}{{\mathcal{I}}_{\scriptscriptstyle #1}}}
\newcommand{\ebdO}[1][]{
	\ifthenelse{\equal{#1}{}}{\mathcal{E}}{\mathcal{E}_{#1}}}
\newcommand{\pro}[1][]{
	\ifthenelse{\equal{#1}{}}{\mathcal{Q}}{\mathcal{Q}_{#1}}}
\newcommand{\cov}[1][]{C_{#1}} 
\newcommand{\empcov}[1][]{\widehat{\cov}_{ #1}} 
\newcommand{\pf}[1][]{
   \ifthenelse{\equal{#1}{}}{\mathcal{P}}{\mathcal{P}_{#1}}}
\newcommand{\epf}[1][]{
   \ifthenelse{\equal{#1}{}}{\widehat{\mathcal{P}}}{\widehat{\mathcal{P}}_{#1}}}
\newcommand{\ko}[1][]{
   \ifthenelse{\equal{#1}{}}{P}{P_{#1}}}
\newcommand{\eko}[1][]{
   \ifthenelse{\equal{#1}{}}{\widehat{\mathcal{K}}}{\widehat{\mathcal{K}}_{#1}}}
\newcommand{\gram}[1][]{
	\ifthenelse{\equal{#1}{}}{{G}}{{G}_{\scriptscriptstyle #1}}}
\newcommand{\outgram}[1][]{
	\ifthenelse{\equal{#1}{}}{{L}}{{L}_{\scriptscriptstyle #1}}}
\newcommand{\ingram}[1][]{
	\ifthenelse{\equal{#1}{}}{{K}}{{K}_{\scriptscriptstyle #1}}}
\DeclareMathOperator{\mspan}{span}
\DeclareMathOperator*{\argmin}{arg\,min} 
\newif\ifcomments
\newcommand{\mattes}[1]{{\color{red}{\textbf{Mattes}: #1} }}
\newcommand{\mattes}[1]{{}}
\theoremstyle{plain}
\newtheorem{theorem}{Theorem}[section]
\newtheorem{corollary}[theorem]{Corollary}
\newtheorem{definition}[theorem]{Definition}
\theoremstyle{definition}
\theoremstyle{definition}
\newtheorem{assump}{Assumption}
\theoremstyle{remark}
\newtheorem{remark}[theorem]{Remark}
\newtheorem{example}[theorem]{Example}
\title[]{Nonparametric approximation\\of conditional expectation operators}
\author{Mattes Mollenhauer}
\address{\vspace{-.6cm}Freie Universität Berlin}
\email{mattes.mollenhauer@fu-berlin.de}
\author{\vspace{-1cm}Péter Koltai}
\address{\vspace{-.6cm}Universität Bayreuth}
\email{peter.koltai@uni-bayreuth.de}
\begin{document}
  
	\begin{abstract}
		Given the joint distribution of two random variables $X$ and $Y$ 
		on some second countable locally compact Hausdorff space,
		we investigate the statistical approximation of 
		the $L^2$-operator $\ko$ defined by 
		${[ \ko f](x) := \Ex[ f(Y) \mid X = x ]}$
		under minimal assumptions.
		By modifying its domain, we
		prove that $\ko$ can be arbitrarily well approximated in operator norm
		by Hilbert--Schmidt operators acting on a
		reproducing kernel Hilbert space.
		This fact allows to
		estimate $\ko$ uniformly 
		by finite-rank operators over a dense subspace
		even when $\ko$ is not compact. 
		In terms of modes of convergence,
		we thereby obtain the superiority of
		kernel-based techniques over classically used
		parametric projection approaches such as Galerkin methods. 
		This also provides a novel perspective on which
		limiting object the nonparametric estimate of $\ko$ converges to.
		As an application, 
		we show that these results are particularly important
		for a large family of spectral analysis techniques for Markov
		transition operators.
		Our investigation also gives a 
		new asymptotic perspective on the so-called
		kernel conditional mean embedding, which is the theoretical foundation
		of a wide variety of techniques in kernel-based nonparametric inference.	
   \end{abstract}

\subjclass[2020]{46E22, 47A58, 46B28, 62J02, 62G05}

\keywords{reproducing kernel Hilbert space, 
        conditional mean embedding,
	    conditional expectation operator, 
        vector-valued kernel, 
        maximum mean discrepancy.}

\maketitle

\section{Introduction}
\label{sec:introduction}

We consider two random variables $X,Y$
taking values in a measurable space
$(E, \sigalg_E)$ where $E$ is a second countable locally compact Hausdorff space
and $\sigalg_E$
its Borel $\sigma$-field.
Let $(\Omega, \sigalg, \Pr)$ be the underlying probability space
with expectation operator $\Ex$.
Let $\invmeas$ denote the 
the pushforward of $\Pr$ under $X$,
i.e., $X \sim \invmeas$ and let
$L^2(E,\sigalg_E,\invmeas;\R) = L^2(\invmeas)$ be the space
of real-valued Lebesgue square integrable functions on $(E, \sigalg_{E})$
with respect to $\invmeas$. Analogously, define $\nu$ as the 
pushforward of $\Pr$ under $Y$ on $E$, i.e, $Y \sim \nu$.
Our goal is to perform a nonparametric estimation
of the \emph{conditional expectation operator}
$\ko \colon L^2(\nu) \rightarrow L^2(\invmeas)$ defined by
\begin{equation*} 
    [\ko f](x) := \Ex[ f(Y) \mid X = x] = \int_E f(y)\, \tk(x, \dd y) ,
\end{equation*}
where $p: E \times \sigalg_{E} \rightarrow \R_+$ is the \emph{Markov kernel}
which describes a \emph{regular version} of
the distribution of $Y$ conditioned on $X$
in terms of
\begin{equation*}
\Pr[ Y \in \mathcal{A} \mid X = x] = 
\int_{\mathcal{A}} \tk(x, \dd y) = \tk(x, \mathcal{A})
\end{equation*}
for all $x \in E$ and events $\mathcal{A} \in \sigalg_E$.
We will introduce additional notation and details
as well as appropriate assumptions in Section~\ref{sec:prelims}.

\subsection*{Scope of this work}
We derive a natural and self-contained theory of the approximation of
$\ko$ over functions in a \emph{reproducing kernel Hilbert space} (RKHS) which
is densely embedded into the domain of $\ko$.
Our analysis shows that the approximation of $\ko$ is strongly connected
to recently developed concepts in RKHS-based inference and statistical learning
such as the 
\emph{kernel mean embedding}~\citep{Berlinet04:RKHS, Smola07Hilbert},
\emph{maximum mean discrepancy}~\citep{Gretton12:KTT,Sejdinovic2013}
and the \emph{conditional mean embedding} 
\citep{Gruen12, Park2020MeasureTheoretic},
which allows to extend our theory to several practical directions
such as hypothesis testing, filtering and spectral analysis for Markov kernels.

We will focus on deriving approximation-theoretic results
instead of a statistical analysis of convergence rates in our
investigation. However, we show that convergence results can be carried over
from recent results for regularized least squares regression
with infinite-dimensional output variables due to
\citet{LiEtAl2022} and \citet{MollenhauerEtAl2022}.

\begin{example}[Data-driven approximation of Markov transition operators]
As a practical application, we argue that our theory provides a statistical model
for a well-known family of numerical spectral analysis techniques for 
\emph{Markov transition operators}, which we highlight in the following example.

The above scenario is of particular practical interest
when $Y := X_{t+\tau}$ and $X := X_t$
for some stationary Markov process $(X_t)_{t \in \R}$ on
the state space $(E, \sigalg_{E})$,
as in this case $\invmeas = \nu$ and $\ko$ given by
\begin{equation}
	\label{eq:markov_transition_operator}
	[\ko f](x) = \Ex[ f(X_{t + \tau}) \mid X_t = x ]
\end{equation}
is the Markov transition operator
with respect to the time lag $\tau >0$.

In the context of Markov processes and dynamical systems, 
it is known that the
spectrum of $\ko$ and the associated eigenfunctions determine
crucial properties of the underlying dynamics such as ergodicity, speed of
mixing, the decomposition of the state space 
into almost invariant (so-called \emph{metastable}) components and many more
~\citep{Davies1982Metastable1,Davies1982Metastable2,Davies1983Spectral, 
Roberts1997Geometric, Roberts2001Geometric, Kontoyiannis2003, Kontoyiannis2005}.


As such, the operator $\ko$ is empirically approximated in 
a wide variety of applied scientific disciplines
by performing a projection onto finite-dimensional subspaces of 
$L^2(\invmeas)$
\citep{%
 Li76,
 DiLi91,
 DJ99,
 JuKo09,
 Schmid10,
 TRLBK14, 
 WKR15, 
 KKS16, 
 KNKWKSN18}.
That is, given an $n$-dimensional subspace 
$V_n \subset L^2(\invmeas) \cap L^2(\nu)$ spanned
by a dictionary of basis elements,
a Monte Carlo quadrature based
on observational data is performed on $V_n$
to obtain the empirical finite-rank operator \smash{$\widehat{\ko}_{n}$}
as an estimate of the \emph{Galerkin-approximation} 
$\ko_n := \Pi_{\invmeas,n} \ko \Pi_{\nu,n}$. Here, 
$\Pi_{\invmeas,n}$ and $\Pi_{\nu,n}$ are the
the orthogonal projection operators onto $V_n$
acting on $L^2(\invmeas)$ and $L^2(\nu)$, respectively.
Under the assumption of ergodicity, one typically obtains
\smash{$\widehat{\ko}_n \to P_n$} almost surely 
by some version of Birkhoff's ergodic theorem~\citep{KKS16}.
From a statistical perpective, these methods can be regarded as
\emph{parametric models}, the parameter choice being
the fixed basis functions spanning the ansatz space $V_n$.
By increasing the number of spanning elements,
a convergence of the Galerkin approximation $P_n$ to
the real operator $\ko$ in only the \emph{strong operator topology}
as $n \to \infty$ can generally be obtained~\citep{KoMe18}.
In practice, the above methods are typically aimed
at performing an empirical spectral analysis of $\ko$,
i.e., spectral properties of \smash{$\widehat{\ko}_n$} are computed
and used as an approximation of the spectral properties of $\ko$.
It is well-known that most desirable spectral convergence results 
require a convergence in operator norm~\citep{Chatelin1983}.
The
spectral convergence of the parametric approaches mentioned above
is therefore ultimately limited by the pointwise convergence of
numerical projection methods~\citep{Hackbusch}.
As a \emph{nonparametric} counterpart of the given parametric methods,
there exist popular RKHS-based versions where
the basis functions are adapted to the 
data~\citep{WRK15:Kernel, Klus2019}.
For these methods, one may hope
that they allow for stronger modes of convergence
than the classical projection methods.
However, it has not been shown yet 
which object is actually approximated
in the infinite-sample limit, as the
asymptotics are significantly more complicated in this case.
Our theory solves this problem and confirms that
the overall convergence is stronger than in the parametric case under
mild assumptions.
The strength of this result comes at the price of 
requiring to restrict the domain of the operator onto an RKHS. 
Whether relevant objects, such as eigenfunctions, of 
the original operator are contained in this space, is in general an open question.
\end{example}

\subsection*{Structure of this paper}
This work is structured as follows.
We delineate related theoretical work
in the field of nonparametric 
statistical inference in Section~\ref{sec:related_work}.
For better accessibility, we present our main results
from a high-level viewpoint in Section~\ref{sec:main_results}.
Section~\ref{sec:prelims} contains the mathematical preliminaries
and detailed assumptions.
We prove our main results in
Section~\ref{sec:nonparametric_approximation} along
some additional findings and elaborate on their
implications from a theoretical perspective.
In Section~\ref{sec:empirical_estimation},
we outline the empirical estimation in the context of
inverse problems and regularization theory, which we
investigate in detail for the 
Tikhonov--Phillips case in Section~\ref{sec:tikhonov_phillips_regularization}.
We revisit our example of Markov transition operators
in Section~\ref{sec:markov_operators} and conclude with
a brief outlook on potential future research in Section~\ref{sec:outlook}.

\section{Related work}
\label{sec:related_work}

This work is inspired by recent development
in RKHS-based statistical inference.
Although our investigation is targeted at
creating a more general mathematical perspective
from an approximation viewpoint,
we make use of the theoretical tools which
were originally developed in this context. 
We therefore highlight the most important work which impacted
our analysis.

Over the last years, the theory of RKHS-based inference
and the 
kernel mean embedding~(KME)\citep{Smola07Hilbert}
spawned a vast variety of methods
in various statistical disciplines.
In this context, a nonparametric
approximation of the conditional mean operation 
$(x,f) \mapsto \Ex[f(Y) \mid X = x]  $ for functions $f$
in some RKHS $\inrkhs$ over $E$ was developed
by \citet{SHSF09} as a purely
linear-algebraic concept under the name 
\emph{conditional mean embedding} (CME). 
This idea has since been used as the theoretical backbone
for methods in Bayesian analysis, graphical models, 
time series analysis, spectral analysis
and dimensionality reduction,
filtering, reinforcement learning and many 
more; we refer to the overview by~\citet{MFSS16} for a 
non-exhaustive selection of applications.

Although the CME as described by \citet{SHSF09} performs well in
applications, the mathematical assumptions imposed in the original work
are typically violated; this has been thoroughly 
examined by~\citet{Klebanov2019rigorous}.
The foundational problems in the theory of the CME led to an investigation of
the approximation of RKHS-valued conditional Bochner expectations
from a regression perspective. In particular, \cite{Gruen12} show that the
empirical Tikhonov--Phillips solution of a regularized least squares regression problem
in a vector-valued reproducing kernel Hilbert space
coincides with the empirical estimate derived by~\citet{SHSF09}. 
Additionally, \cite{Gruen13} propose to use the same estimate for the approximation
of linear operators in a very broad sense
 but do not offer an asymptotic perspective of this idea.

\citet{Park2020MeasureTheoretic} extend the asymptotic 
regression theory of the CME
in the framework of regularised least squares regression in a
\emph{vector-valued reproducing kernel Hilbert space} (vRKHS)
with infinite-dimensional response variable 
\citep{LiEtAl2022, MollenhauerEtAl2022}.
\citet{Klebanov2020linear} extend the operator-theoretic interpretation
of the CME.
In particular, they prove existence of an operator on an RKHS which expresses the
conditional mean under the assumption that the true conditional mean function
is a member of a corresponding tensor product space. 

Concluding the overall picture of the 
aforementioned work: 
while the regression perspective of the 
CME \citep{Gruen12, Park2020MeasureTheoretic} allows to 
consider asymptotic interpretations and prove convergence results, it
has the fundamental drawback that the algebraically interesting
operator-theoretic perspective of $\ko$ is not present. Even more so,
the estimation of spectral properties of $\ko$ (for example in the case
of Markov operators or for dimensionality reduction purposes) is impossible.
Conversely, the operator-theoretic formulation of the CME
\citep{SHSF09, Klebanov2019rigorous, Klebanov2020linear}
lacks an asymptotic perspective and suffers from complex interdependencies of
various assumptions~\citep{Klebanov2019rigorous},
severely impeding a theoretical mathematical analysis. Additionally,
the approximation viewpoint in the $L^2$-operator context has not been investigated yet. We will see that this approximation admits a natural
perspective in terms of the maximum mean discrepancy
between the underlying Markov kernels.

Regarded in the context of the CME,
our results can be interpreted as the missing link between 
the recent work
of~\citet{Klebanov2020linear} and \cite{Park2020MeasureTheoretic}.
In particular, we provide an asymptotic approximation perspective in
the operator-theoretic context of conditional expectations.
On our way, we moreover improve a surrogate risk bound 
used by~\citet{Gruen12} and~\citet{Park2020MeasureTheoretic} 
which serves as the
theoretical foundation for the regression perspective of the CME.
However, our results are formulated in a more general perspective
in terms of the numerical approximation of linear operators 
and can certainly be regarded outside of the context 
of the previously mentioned work on the CME.

\section{Main results}
\label{sec:main_results}

We will briefly outline some main results
and the general content of this work. All discussed 
concepts, mathematical preliminaries and assumptions
will be introduced in Section~\ref{sec:prelims}.
The results presented below are
restated and proven in Section~\ref{sec:nonparametric_approximation}
with more attention to detail---including a careful discussion of
the assumptions.

As previously mentioned, we aim to approximate $\ko$
over a separable reproducing kernel Hilbert space
$\inrkhs$ consisting of functions from $E$ to $\R$ 
generated by the canonical
feature map $\varphi: E \rightarrow \inrkhs$.
We will choose the space  $\inrkhs$ such that is is a subset of 
$C_0(E)$, i.e., the space
of continuous real-valued functions which vanish at infinity~\citep{Carmeli10:universality}.
Additionally, we choose $\inrkhs$ such that it can be continuously embedded
into $L^2(\invmeas)$ as well as $L^2(\nu)$. That is, the inclusion operator 
$\inrkhsi_\invmeas: \inrkhs \rightarrow L^2(\invmeas)$ 
defined by $f \mapsto [f]_{\sim L^2(\invmeas)}$
and the analougously defined inclusion 
$\inrkhsi_\nu : \inrkhs \rightarrow L^2(\nu)$ 
are bounded~\citep{StCh08}. Moreover,
we will generally assume that $\inrkhs$ is 
dense in both $L^2(\invmeas)$ and $L^2(\nu)$.
This property is called $L^2$\emph{-universality}~\citep{Carmeli10:universality,SFL11}.

\begin{remark}[Inclusion operators and notation]
	\label{rem:notation_inclusion}
We will sometimes suppress the inclusion operators
$\inrkhsi_\invmeas$ and $\inrkhsi_\nu$ in our notation
when the context is clear.
In particular, for $f \in \inrkhs$ we will simply write
$\norm{f}_{L^2(\nu)}$ instead of $\norm{\inrkhsi_\nu f }_{L^2(\nu)}$.
Furthermore, under the above assumptions, 
we may understand the operator 
$\ko \inrkhsi_\nu: \inrkhs \rightarrow L^2(\invmeas)$ as a
conditional expectation operator 
acting on functions of $\inrkhs$ via
\begin{equation}
	\label{eq:notation}
	[\ko \inrkhsi_\nu f] (x) = \Ex[f (Y) \mid X = x ] \in L^2(\invmeas) \quad \textnormal{for } f \in \inrkhs
\end{equation}
and use the norm of $\inrkhs$
on its domain. 
By abuse of notation, we may write 
$\ko: \inrkhs \rightarrow L^2(\invmeas)$
instead of $\ko \inrkhsi_\nu$
for the operator in \eqref{eq:notation}.
We will emphasize which version of $\ko$ we refer to
by simply distinguishing between 
$\ko: \inrkhs \rightarrow L^2(\invmeas)$ and
$\ko: L^2(\nu) \rightarrow L^2(\invmeas)$.
We write out the corresponding operator norms
$\norm{\ko}_{ \inrkhs \rightarrow L^2(\invmeas) }$
and
$\norm{\ko}_{L^2(\nu) \rightarrow L^2(\invmeas) }$
to prevent confusion.
Note that by boundedness of $\inrkhsi_\nu$, we have
$\norm{\ko}_{ \inrkhs \rightarrow L^2(\invmeas) } 
\leq 
\norm{\inrkhsi_\nu} \norm{\ko}_{L^2(\nu) \rightarrow L^2(\invmeas) }$.
Similarly, for every bounded operator $A: \inrkhs  \rightarrow \inrkhs$ 
we can consider the bounded operator
$\inrkhsi_\invmeas A$ from $\inrkhs$ to $L^2(\invmeas)$, 
which we will also abbreviate as
 $A: \inrkhs \rightarrow L^2(\invmeas)$.
At this point, it is worth mentioning 
that functions in $\inrkhs$ are generally defined pointwise,
while elements of $L^2(\invmeas)$ are equivalence 
classes of $\invmeas$-a.e.\ equivalent functions.
\end{remark}

\begin{remark}[$P: \inrkhs \to L^2(\invmeas)$ is always Hilbert--Schmidt]
\label{rem:hs}
It is known that under the assumptions above, the 
inclusions $\inrkhsi_\invmeas$ and $\inrkhsi_\nu$
are Hilbert--Schmidt operators~\citep[Chapter 4.3]{StCh08}.
Therefore, the operator $\ko: \inrkhs \rightarrow L^2(\invmeas)$
is Hilbert--Schmidt (and hence compact), independently of the fact whether
$\ko: L^2(\nu) \rightarrow L^2(\invmeas)$ is Hilbert--Schmidt or not.
Intuitively, the approximation of $\ko$ over functions in $\inrkhs$ in operator norm is
therefore generally possible with finite-rank operators from $\inrkhs$ to $L^2(\invmeas)$.
Since we can not efficiently impose the class of Hilbert--Schmidt operators
from $\inrkhs$ to $L^2(\invmeas)$ as a nonparametric hypothesis space in practical applications,
we now provide a more suitable approximation theory for practical scenarios.
The following result  shows that we may actually restrict ourselves to the class
of Hilbert--Schmidt operators mapping from the space $\inrkhs$ to itself
and still expect 
an approximation of $\ko: \inrkhs \rightarrow L^2(\invmeas)$
up to an arbitrary degree of accuracy.
\end{remark}

\begin{theorem}[Approximation by Hilbert--Schmidt operators]
	\label{thm:nonparametric_approximation}
	If there exists a reproducing kernel Hilbert space $\inrkhs \subset C_0(E)$
	which is densely and continuously embedded
	into both $L^2(\invmeas)$ and $L^2(\nu)$,
	then  for every $\delta > 0$, there exists a Hilbert--Schmidt 
    operator $A \colon \inrkhs \rightarrow \inrkhs$, such that
	\begin{equation*}
		\norm{A - \ko}_{\inrkhs \rightarrow L^2(\invmeas)} < \delta.
	\end{equation*}
\end{theorem}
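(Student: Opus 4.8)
The plan is to leverage the compactness of $\ko \inrkhsi_\nu$ noted above together with the assumed density of $\inrkhs$ in $L^2(\invmeas)$. Since $\inrkhsi_\nu$ is Hilbert--Schmidt and $\ko \colon L^2(\nu) \to L^2(\invmeas)$ is contractive, the composition $\ko \inrkhsi_\nu \colon \inrkhs \to L^2(\invmeas)$ is Hilbert--Schmidt and, in particular, compact. As compact operators between Hilbert spaces are exactly the operator-norm limits of finite-rank operators, for the given $\delta > 0$ I would first fix a finite-rank operator $F \colon \inrkhs \to L^2(\invmeas)$ with $\norm{F - \ko \inrkhsi_\nu}_{\inrkhs \to L^2(\invmeas)} < \delta/2$, written as $F = \sum_{i=1}^n \innerprod{\cdot}{u_i}_\inrkhs g_i$ for suitable $u_i \in \inrkhs$ and $g_i \in L^2(\invmeas)$.

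The next step is to replace the $L^2(\invmeas)$-targets $g_i$ by genuine RKHS functions so that the resulting operator maps back into $\inrkhs$. Here I would invoke the hypothesis that the range of $\inrkhsi_\invmeas$ is dense in $L^2(\invmeas)$: for each $i$ pick $h_i \in \inrkhs$ with $\norm{\inrkhsi_\invmeas h_i - g_i}_{L^2(\invmeas)}$ as small as needed, and set $A := \sum_{i=1}^n \innerprod{\cdot}{u_i}_\inrkhs h_i \colon \inrkhs \to \inrkhs$. Being finite-rank, $A$ is automatically Hilbert--Schmidt, and under the paper's abuse of notation it induces the operator $\inrkhsi_\invmeas A \colon \inrkhs \to L^2(\invmeas)$ against which the theorem's norm is measured.

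To close the argument I would estimate, using the triangle inequality and the fact that a rank-one operator $\innerprod{\cdot}{u}_\inrkhs g \colon \inrkhs \to L^2(\invmeas)$ has operator norm $\norm{u}_\inrkhs \norm{g}_{L^2(\invmeas)}$,
\begin{equation*}
	\norm{\inrkhsi_\invmeas A - F}_{\inrkhs \to L^2(\invmeas)} \leq \sum_{i=1}^n \norm{u_i}_\inrkhs \, \norm{\inrkhsi_\invmeas h_i - g_i}_{L^2(\invmeas)},
\end{equation*}
which the choice of the $h_i$ forces below $\delta/2$. A final triangle inequality then yields $\norm{\inrkhsi_\invmeas A - \ko \inrkhsi_\nu}_{\inrkhs \to L^2(\invmeas)} < \delta$, as required.

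I expect the only genuine subtlety to be bookkeeping rather than analysis: one must track carefully where the inclusions $\inrkhsi_\invmeas$ and $\inrkhsi_\nu$ are suppressed, and use each density hypothesis in the correct space---density of $\inrkhs$ in $L^2(\invmeas)$ for the target replacement, while the compactness supplying $F$ rests instead on the Hilbert--Schmidt property of $\inrkhsi_\nu$. Notably, no compactness or Hilbert--Schmidt property of $\ko$ as an $L^2$-operator enters the argument, which is precisely why the conclusion survives even when $\ko \colon L^2(\nu) \to L^2(\invmeas)$ fails to be compact.
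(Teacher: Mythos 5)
Your proof is correct, but it takes a genuinely different route from the paper's. The paper proves the theorem as a corollary of its surrogate regression bound (Theorem~\ref{thm:regression_viewpoint}), $\norm{A - \ko}^2_{\inrkhs \rightarrow L^2(\invmeas)} \leq \norm{\Fstar - A^*\varphi(\cdot)}^2_{L^2(E,\sigalg_E,\invmeas;\inrkhs)}$, combined with the density of the vector-valued RKHS $\vecrkhs$ in the Bochner space $L^2(E,\sigalg_E,\invmeas;\inrkhs)$ (a nontrivial universality result cited from Carmeli et al.); the approximant is then the operator corresponding to an element of $\vecrkhs$ close to the conditional mean embedding $\Fstar$. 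You instead exploit that $\ko\inrkhsi_\nu$ is Hilbert--Schmidt (hence compact), truncate to a finite-rank operator $\sum_i \innerprod{\cdot}{u_i}_\inrkhs\, g_i$ into $L^2(\invmeas)$, and then use only the \emph{scalar} density of $\inrkhs$ in $L^2(\invmeas)$ to replace each target $g_i$ by an RKHS function; the rank-one norm identity and two triangle inequalities finish the argument. Your route is more elementary and self-contained: it avoids the vector-valued universality machinery entirely, it makes transparent that density of $\inrkhs$ in $L^2(\nu)$ is never actually used (only the Hilbert--Schmidt property of $\inrkhsi_\nu$, which under $\inrkhs \subset C_0(E)$ follows from boundedness of the kernel), and since your $A$ is already finite-rank it delivers Corollary~\ref{cor:nonparametric_approximation} at no extra cost. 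What the paper's route buys instead is the quantitative bound by the Bochner-space excess risk, which is precisely what connects the approximation statement to the CME regression problem and makes the later empirical estimation theory (Sections~\ref{sec:empirical_estimation} and~\ref{sec:tikhonov_phillips_regularization}) go through; your construction establishes existence but does not by itself tie the approximant to an object one can estimate by regression.
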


\begin{remark}
	Some remarks related to Theorem~\ref{thm:nonparametric_approximation}
	are in order.
	\begin{enumerate}
	\item We do not require $\ko: L^2(\nu) \rightarrow L^2(\invmeas)$ 
	to be a Hilbert--Schmidt operator
	or compact in order 
	for the above statement to hold.
	Our result is not a contradiction to the known
	fact that operator norm
	limits of Hilbert--Schmidt operators are compact.
	The reason for that is that
	the compactness property is given with respect
	to the norm $\norm{\cdot}_{\inrkhs}$ on the domain,
	which is stronger than the norm $\norm{\cdot}_{L^2(\nu)}$.
	Hence, the continuous extension
	to $A: L^2(\nu) \rightarrow \inrkhs$
	via the known construction
	for bounded operators~\citep[Theorem 4.5]{Weidmann} is generally
	not compact.
	This can equivalently be seen by the fact that $\inrkhsi_\nu$ does
	generally not admit a globally defined bounded inverse.
	We visualize Theorem~\ref{thm:nonparametric_approximation}
	in Figure~\ref{fig:nonparametric_approximation}.
	\item 
	The assumptions on $\inrkhs$
	are not restrictive, as they are
	well examined in statistical learning theory and often satisfied
	for particular RKHSs used 
	in practice. 
	It is actually sufficient to only require that $\inrkhs$ is
	dense in $L^2(\rho)$ for any probability measure
	$\rho$ on $(E, \sigalg_{E})$, as this implies
	denseness in both $L^2(\invmeas)$ and $L^2(\nu)$.
	We address these topics in detail in Section~\ref{sec:prelims}.
	\item We will later also see under which requirements
	there exists a Hilbert--Schmidt operator $A: \inrkhs \rightarrow \inrkhs$
	such that $\norm{A  - \ko}_{\inrkhs \rightarrow L^2(\invmeas)} = 0$.
 	\end{enumerate}
\end{remark}

\begin{figure}[htb]
	\centering

	\begin{tikzcd}[column sep=large, row sep =large]
		L^2(\nu) \arrow[r, "P"]  & L^2(\invmeas)         \\
		\inrkhs \arrow[u, "\inrkhsi_\nu"] 
		\arrow[r, "A \in \HS{\inrkhs}"'] 
		\arrow[ru, "\inrkhsi_\invmeas A"', shift right]  \arrow[ru, "\ko \inrkhsi_\nu", shift left]    & \inrkhs \
		\arrow[u, "\inrkhsi_\invmeas"']
	\end{tikzcd}
	
	\caption{Nonparametric approximation of
		$\ko$ over functions in $\inrkhs$ by a Hilbert--Schmidt operator
		$A \in \HS{\inrkhs}$. Theorem~\ref{thm:nonparametric_approximation}
		shows that 
		$ \ko  \inrkhsi_\nu \approx \inrkhsi_\invmeas A$
		to arbitrary accuracy in the associated operator norm.
		The operator $A$ is approximated by finite-rank operators on $\inrkhs$
		in Corollary~\ref{cor:nonparametric_approximation}.}

	\label{fig:nonparametric_approximation}
\end{figure}

\begin{corollary}
	\label{cor:nonparametric_approximation}
 	Under the assumptions of Theorem~\ref{thm:nonparametric_approximation},
 	there exists a sequence of finite-rank operators $(A_n)_{n \in \N}$
	from $\inrkhs$ to $\inrkhs$ such that 
	$\norm{A_n - \ko}_{\inrkhs \rightarrow L^2(\invmeas)} \to 0$ as $n \to \infty$.
\end{corollary}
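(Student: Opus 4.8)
The plan is to combine Theorem~\ref{thm:nonparametric_approximation} with the elementary fact that every Hilbert--Schmidt operator on a separable Hilbert space is an operator-norm limit of finite-rank operators. Since $\inrkhs$ is separable and the inclusion $\inrkhsi_\invmeas$ is bounded (in fact Hilbert--Schmidt), the argument reduces to a short diagonal construction, and the only point requiring care is the bookkeeping between the intrinsic norm $\norm{\cdot}_{\inrkhs \to \inrkhs}$ and the target norm $\norm{\cdot}_{\inrkhs \to L^2(\invmeas)}$.

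First I would fix the null sequence $\delta_n := 1/n$ and apply Theorem~\ref{thm:nonparametric_approximation} with accuracy $\delta_n/2$ to obtain, for each $n \in \N$, a Hilbert--Schmidt operator $B_n \in \HS{\inrkhs}$ satisfying $\norm{B_n - \ko}_{\inrkhs \to L^2(\invmeas)} < \delta_n/2$. Next I would approximate each $B_n$ by a finite-rank operator in the intrinsic operator norm on $\inrkhs$: choosing an orthonormal basis $(e_j)_{j \in \N}$ of $\inrkhs$ with associated orthogonal projections $\Pi_{k}$ onto $\mspan\{e_1, \dots, e_{k}\}$, the compactness of $B_n$ guarantees $\norm{B_n - B_n \Pi_{k}}_{\inrkhs \to \inrkhs} \to 0$ as $k \to \infty$. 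Hence I can pick $k_n$ large enough that the finite-rank operator $A_n := B_n \Pi_{k_n} \colon \inrkhs \to \inrkhs$ obeys $\norm{B_n - A_n}_{\inrkhs \to \inrkhs} < \delta_n / \bigl(2(1 + \norm{\inrkhsi_\invmeas})\bigr)$.

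It then remains to transfer the estimate to the target norm. Since $A_n - B_n$ maps $\inrkhs$ to $\inrkhs$ and we regard it in $L^2(\invmeas)$ through the bounded inclusion $\inrkhsi_\invmeas$, submultiplicativity of the operator norm together with the triangle inequality yields
\[
\norm{A_n - \ko}_{\inrkhs \to L^2(\invmeas)} \leq \norm{\inrkhsi_\invmeas}\,\norm{A_n - B_n}_{\inrkhs \to \inrkhs} + \norm{B_n - \ko}_{\inrkhs \to L^2(\invmeas)} < \tfrac{\delta_n}{2} + \tfrac{\delta_n}{2} = \delta_n ,
\]
which tends to $0$ as $n \to \infty$. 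There is no serious obstacle in this argument; the only subtlety worth flagging is that the finite-rank reduction must be performed in the $\inrkhs \to \inrkhs$ norm and only afterwards pushed into $L^2(\invmeas)$ via the bounded embedding $\inrkhsi_\invmeas$, since compactness (and thus finite-rank approximability) of $A$ holds with respect to the stronger norm $\norm{\cdot}_\inrkhs$ on the domain, as emphasized in the remark following Theorem~\ref{thm:nonparametric_approximation}.
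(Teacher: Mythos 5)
Your proposal is correct and follows essentially the same route as the paper: approximate $\ko$ by a Hilbert--Schmidt operator via Theorem~\ref{thm:nonparametric_approximation}, approximate that operator by a finite-rank one in the intrinsic norm on $\inrkhs$, and combine the two estimates through the bounded inclusion $\inrkhsi_\invmeas$ and the triangle inequality. The only cosmetic difference is that you obtain the finite-rank approximant from compactness via truncation by projections $B_n\Pi_{k_n}$ in the operator norm, whereas the paper invokes the density of finite-rank operators in $\HS{\inrkhs}$ and the bound $\norm{\cdot}_{\inrkhs \rightarrow \inrkhs} \leq \norm{\cdot}_{\HS{\inrkhs}}$; both are standard and interchangeable here.
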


As we will prove, 
such a sequence $(A_n)_{n \in \N}$
can be almost surely computed in practice by 
performing a nonparametric regression based on a linear space
consisting of functions mapping from $E$ to $\inrkhs$
given by 
\begin{equation*}
	\vecrkhs = \{  A \varphi(\cdot) : E \rightarrow \inrkhs \, 
	\mid \, A : \inrkhs \rightarrow \inrkhs \textnormal{ is Hilbert--Schmidt} \}.
\end{equation*}
One can show that
the space $\vecrkhs$ is actually a
vector-valued reproducing kernel Hilbert space 
\citep{Carmeli06, Carmeli10:universality} consisting of
$\inrkhs$-valued Bochner square integrable functions.
This fact connects our theory directly
to the aforementioned work on conditional mean embeddings.
We show that the approximation of $\ko$ in the norm
$\norm{\cdot}_{\inrkhs \rightarrow L^2(\invmeas)}$
admits a natural measure-theoretic interpretation
in terms of the well-known 
\emph{maximum mean discprepancy}~\citep{GrettonTwoSample12, Sejdinovic2013}, 
paving the way for nonparametric hypothesis tests
based on $\ko$.

The next result is
the theoretical foundation for Theorem~\ref{thm:nonparametric_approximation}
and will allow us to construct an estimator of $\ko \colon \inrkhs \rightarrow L^2(\invmeas)$
(see Section~\ref{sec:empirical_estimation}).
It shows how the approximation of $\ko$ is related to
the approximation of a conditional Bochner expectation and
improves surrogate risk bounds used by
\citet{Gruen12} and \citet{Park2020MeasureTheoretic}
in the context of the CME
(see Remark~\ref{rem:surrogate_risk} for details).

\begin{theorem}[Regression and conditional mean approximation]
	\label{thm:regression_viewpoint}
	Under the assumptions of Theorem~\ref{thm:nonparametric_approximation},
	we have for every
	Hilbert--Schmidt operator $A \colon \inrkhs \rightarrow \inrkhs$ that
	\begin{equation*}
		\norm{A - \ko}^2_{\inrkhs \rightarrow L^2(\invmeas)} \leq
		\Ex\left[ \norm{ \Fstar(X) - A^*\varphi(X)}_\inrkhs^2   \right] = 
		\norm{ \Fstar - A^*\varphi(\cdot)  }_{ L^2(E, \sigalg_{E}, \invmeas; \inrkhs) }^2,
	\end{equation*}
	where $\Fstar = \Ex[ \varphi(Y) \mid X = \cdot ] \in { L^2(E, \sigalg_{E}, \invmeas; \inrkhs) }$
	is any regular version of
	the $\inrkhs$-valued conditional Bochner expectation 
	$\Ex[ \varphi(Y)  \mid X ] \in L^2(\Omega, \sigalg, \Pr; \inrkhs)$.
	The given bound is sharp.
\end{theorem}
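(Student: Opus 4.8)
The plan is to reduce the operator-norm estimate to a pointwise computation inside $\inrkhs$ via the reproducing property, and then to spend a single Cauchy--Schwarz inequality. First I would fix $f \in \inrkhs$ and rewrite the two functions whose $L^2(\invmeas)$-difference must be controlled. For the Hilbert--Schmidt operator $A$, the reproducing property and the definition of the adjoint give the genuine pointwise identity $[Af](x) = \innerprod{Af}{\varphi(x)}_\inrkhs = \innerprod{f}{A^*\varphi(x)}_\inrkhs$, which is meaningful because $Af \in \inrkhs \subset C_0(E)$ is defined pointwise. The corresponding identity for $\ko$ is the heart of the matter: I would show that $[\ko f](x) = \innerprod{f}{\Fstar(x)}_\inrkhs$ for $\invmeas$-a.e.\ $x$. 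Writing $f(Y) = \innerprod{f}{\varphi(Y)}_\inrkhs$ and commuting the bounded linear functional $\innerprod{f}{\cdot}_\inrkhs$ with the conditional Bochner expectation yields $\Ex[f(Y) \mid X = x] = \innerprod{f}{\Ex[\varphi(Y) \mid X = x]}_\inrkhs = \innerprod{f}{\Fstar(x)}_\inrkhs$; this uses the existence of $\Fstar$ as a square Bochner-integrable conditional expectation (available under the standing assumptions, in particular boundedness of the kernel) and the standard commutation of conditional Bochner expectations with bounded operators.

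Subtracting the two representations and applying Cauchy--Schwarz pointwise in $x$, I would obtain
\[
\norm{(A - \ko)f}^2_{L^2(\invmeas)} = \int_E \abs{\innerprod{f}{A^*\varphi(x) - \Fstar(x)}_\inrkhs}^2 \,\dd\invmeas(x) \le \norm{f}^2_\inrkhs \int_E \norm{A^*\varphi(x) - \Fstar(x)}^2_\inrkhs \,\dd\invmeas(x),
\]
and taking the supremum over $\norm{f}_\inrkhs \le 1$ produces exactly the asserted inequality, the right-hand integral being by definition $\Ex[\norm{\Fstar(X) - A^*\varphi(X)}^2_\inrkhs] = \norm{\Fstar - A^*\varphi(\cdot)}^2_{L^2(E,\sigalg_E,\invmeas;\inrkhs)}$.

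For the sharpness claim I would make the underlying quadratic form explicit. Setting $g := A^*\varphi(\cdot) - \Fstar \in L^2(\invmeas;\inrkhs)$ and $C_g := \int_E g(x) \otimes g(x)\,\dd\invmeas(x)$, the computation above shows that $f \mapsto \norm{(A - \ko)f}^2_{L^2(\invmeas)}$ is the quadratic form $\innerprod{f}{C_g f}_\inrkhs$ of the positive trace-class operator $C_g$ on $\inrkhs$, whence $\norm{A - \ko}^2_{\inrkhs \to L^2(\invmeas)} = \norm{C_g}_{\inrkhs \to \inrkhs}$ is its largest eigenvalue, while the surrogate on the right-hand side equals $\tr{C_g}$, the sum of all eigenvalues. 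The inequality $\norm{C_g}_{\inrkhs \to \inrkhs} \le \tr{C_g}$ is therefore an equality exactly when $C_g$ has rank at most one, i.e.\ when $g(x)$ points $\invmeas$-a.e.\ in a single direction; since this configuration is realizable (for instance whenever $\invmeas$ is concentrated on a single atom), the constant $1$ in front of the surrogate cannot be improved and the bound is sharp.

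The main obstacle is the measure-theoretic identity $[\ko f](x) = \innerprod{f}{\Fstar(x)}_\inrkhs$. It requires the $\inrkhs$-valued conditional Bochner expectation $\Fstar$ to be well defined and square integrable, and it demands careful bookkeeping of the distinction between the pointwise-defined elements of $\inrkhs$ and the $\invmeas$-a.e.\ equivalence classes of $L^2(\invmeas)$, so that the per-$f$ almost-everywhere identities survive both the Cauchy--Schwarz step and the passage to the supremum; once this is in place, the remaining computation is routine.
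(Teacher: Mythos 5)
Your proof of the inequality is correct and follows essentially the same route as the paper: rewrite $[Af](x)$ via the reproducing property, identify $[\ko f](x) = \innerprod{f}{\Fstar(x)}_\inrkhs$ by commuting the functional $\innerprod{f}{\cdot}_\inrkhs$ with the conditional Bochner expectation, apply Cauchy--Schwarz pointwise, and take the supremum over the unit ball of $\inrkhs$.

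Where you genuinely diverge is the sharpness claim, and your treatment is stronger than the paper's. The paper merely exhibits one configuration attaining equality: it assumes $A^*\varphi(X) - \Fstar(X) = h$ is $\Pr$-a.s.\ a fixed element $h \in \inrkhs$ and takes $f = h/\norm{h}_\inrkhs$ in the supremum. You instead observe that with $g := A^*\varphi(\cdot) - \Fstar$ the quadratic form $f \mapsto \norm{(A-\ko)f}^2_{L^2(\invmeas)}$ equals $\innerprod{f}{C_g f}_\inrkhs$ for the positive trace-class operator $C_g = \int_E g(x) \otimes g(x)\,\dd\invmeas(x)$, so that the left side of the bound is $\norm{C_g}$ and the right side is $\tr{C_g}$; equality thus holds precisely when $C_g$ has rank at most one, i.e.\ when $g(x)$ is $\invmeas$-a.e.\ a scalar multiple of a single direction. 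This characterization subsumes the paper's constant-difference example (which gives $C_g = h \otimes h$) and tells you exactly how lossy the surrogate is in general: the gap is the sum of all but the largest eigenvalue of $C_g$, a fact the paper's argument does not reveal. The only mild caveat is that for sharpness one should note, as you do, that a rank-one configuration is actually realizable within the standing assumptions; either your atomic-measure example or the paper's constant-$h$ case settles that.
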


\begin{remark}
	In fact, the above result actually holds under less strict
	assumptions, which we will see in Section~\ref{sec:nonparametric_approximation}.
\end{remark}

As is well-known in statistical learning theory,
the right hand side of the bound in Theorem~\ref{thm:regression_viewpoint} 
is exactly the 
so-called \emph{excess risk} $\risk(F) - \risk(\Fstar)$ of the
infinite-dimensional least squares regression problem
of finding
$\argmin_{F \in \vecrkhs} R(F)$, where
\begin{equation*}
	\risk(F):= \Ex \left[  \norm{ \varphi(Y) - F(X) }^2_\inrkhs \right]
	\textnormal{ for } F(\cdot) = A^*\varphi(\cdot).
\end{equation*}
In particular, the risk $\risk(F)$ allows for the decomposition
\begin{equation*}
	\risk(F) = \norm{ \Fstar - F  }_{ L^2(E, \sigalg_{E}, \invmeas; \inrkhs) }^2 
    + \risk(\Fstar)
\end{equation*}
with the irreducible error term $\risk(\Fstar)$.
This puts the approximation of $\ko$ in line with
the formalism developed for 
regularized least squares regression
with reproducing 
kernels which was established
in a series of highly influential papers
\citep{Caponnetto2007, BauerEtAl2007, YaoEtAl07} 
and its connection to inverse problems in Hilbert spaces.
We refer the reader to
\citet{Muecke18} and the references therein for a more detailed overview.

In particular, by employing a generic \emph{regularization strategy}
$g_\reg$
for a regularization parameter $\reg >0$,
such as for example Tikhonov--Phillips regularization, 
spectral cutoff or Landweber iteration 
(see \citet{EHN96}),
we obtain a regularized solution to the above regression problem
via 
\begin{equation*}
	\label{eq:regularized_solution}
	\regF := 
	g_\reg( \vecrkhscov ) \vecrkhsi^*_\invmeas \Fstar \in \vecrkhs,
\end{equation*}
where $\vecrkhscov: \vecrkhs \rightarrow \vecrkhs$ 
is the \emph{generalized kernel covariance operator} 
(see Section~\ref{sec:vecrkhs_integral_operators})
of the space $\vecrkhs$ 
associated with $X$ and $\vecrkhsi_\invmeas: \vecrkhs \rightarrow 
L^2(E, \sigalg_{E}, \invmeas; \inrkhs)$ is the \emph{inclusion operator} of
$\vecrkhs$ into the space of Bochner square integrable functions 
$L^2(E, \sigalg_{E}, \invmeas; \inrkhs)$.

Since $\vecrkhscov$ plays a crucial role in the underlying inverse problem,
we also show that the action of $\vecrkhscov$ on $\vecrkhs$ admits a dual interpretation 
in terms of
composition operators acting on the class of Hilbert--Schmidt operators on $\inrkhs$. 
For the special case that $g_\reg$ describes Tikhonov--Phillips regularization,
this theory lets us obtain a closed form
expression of the regularized solution
in terms of the \emph{kernel covariance operators}
$\cov[XX]$ and  $\cov[XY]$ on $\inrkhs$. We confirm 
this solution to be the adjoint of the CME first derived by \citet{SHSF09}
given by $\regA = (\cov[XX] + \reg \id_\inrkhs)^{-1} \cov[XY]$ without
the limiting assumptions imposed in the original work. Although this
statement does not
come as a surprise, it has never been proven in any of the aforementioned
papers on the CME. Our results can be interpreted as the population analogue
of a similar statement for the empirical case derived by \citet{Gruen12} 
(see Section~\ref{sec:recovering_operator}).

By performing the
empirical discretization of the above operators and problem~\eqref{eq:regularized_solution}
based on a finite set of observations $\bz = ((X_1, Y_1), \dots, (X_n, Y_n))$
sampled iid from $\law(X,Y)$
in terms of the \emph{sampling operator approach}
\citep{Smale2005, Smale2007}, we obtain
a regularized empirical solution $\empregF(\cdot) = \empregA^*\varphi(\cdot)$.
Theorem~\ref{thm:regression_viewpoint} shows 
that the 
convergence $\empregF \to \Fstar$ 
in $L^2(E, \sigalg_{E}, \invmeas; \inrkhs)$ for $n \to \infty$ 
with a suitable regularization scheme
$\reg = \reg(n)$ implies convergence of $\empregA \to \ko$
in the norm~$\norm{\cdot}_{\inrkhs \rightarrow L^2(\invmeas)}$.

\section{Preliminaries and Assumptions}
\label{sec:prelims}

We give a concise overview of the needed mathematical background.

\subsection{Measure, integration and Hilbert space operators}

We briefly introduce the main concepts from measure theory
and linear operators and analysis in Hilbert spaces.
We refer the reader to~\citet{DiestelUhl1977}, \citet{DunfordSchwartz1, DunfordSchwartz2}
and \citet{Dudley} for details.

For any topological space $E$, we will write $\sigalg_E = \mathcal{B}(E)$ for its associated
Borel field.
For any collection of sets $\setM$,  $\sigma(\setM)$ denotes
the intersection of all $\sigma$-fields containing $\setM$.
For any $\sigma$-field $\sigalg$ and countable index set $I$, 
we write $\sigalg^{\otimes I}$ as the product $\sigma$-field (i.e., the smallest
$\sigma$-field with respect to which all coordinate projections on $E^I$ are measurable).
Note that since
$E$ is Polish (i.e., separable and completely metrizable), we have $\mathcal{B}(E^I) = \mathcal{B}(E)^{\otimes I}$, i.e. the Borel field
on the product space generated by the
product topology and the product of the individual Borel fields are equal.
Put differently, the Borel field operator and the product field operator
are compatible with respect to 
product spaces~\citep[][Proposition 4.1.17]{Dudley}.
Moreover, $E^I$ equipped with the product topology is Polish.

In what follows, we write $B$ for a separable real Banach space
with norm $\norm{\cdot}_B$, and $H$ for a separable real Hilbert space with inner product
$\innerprod{\cdot}{\cdot}_H$. 
The expression $\bounded{B, B'}$ stands for the Banach algebra of bounded 
linear operators from $B$ to another Banach space $B'$ 
and is equipped with the operator norm $\norm{\cdot}$.
For the case $B = B'$,  we abbreviate $\bounded{B, B'} = \bounded{B}$.
We will also write $\norm{\cdot} = \norm{\cdot}_{B \rightarrow B'}$, if the choice
of norms on the underlying spaces $B, B'$ needs
to be emphasized.

Let $(\Omega, \sigalg, \invmeas)$
be a measure space.
For any separable Banach space $B$, we let $L^p(\Omega, \sigalg, \invmeas; B)$ denote the
space of strongly $\sigalg-\sigalg_B$ measurable
and Bochner $p$-integrable functions $f \colon \Omega \rightarrow B$
for $ 1 \leq p \leq \infty$. 
In the case of $B = \R$, we simply write $ L^p(\invmeas):= L^p(\Omega, \sigalg, \invmeas; \R) $ 
for the standard space of real-valued Lebesgue $p$-integrable functions.

The expression $H' \otimes H$
denotes the tensor product of Hilbert spaces $H,H'$.
The Hilbert space $H' \otimes H$ is the completion 
of the algebraic tensor product with respect to the
inner product 
$\innerprod{x_1' \otimes x_1}{x_2' \otimes x_2}_{H' \otimes H} = 
\innerprod{x_1'}{x_2'}_{H'} \innerprod{x_1}{x_2}_H$
for $x_1, x_2 \in H$ and $x_1', x_2' \in H'$.
We interpret the element $x' \otimes x \in H' \otimes H$ 
as the linear
rank-one operator $x' \otimes x \colon H \rightarrow H'$
defined by $\tilde{x} \mapsto \innerprod{\tilde{x}}{x}_H x'$ for all $\tilde{x} \in H$.
Whenever $(e_i)_{i \in I}$, $(e'_j)_{j \in J}$ are complete 
orthonormal systems (CONSs) in $H$ and $H'$,
$(e'_j \otimes e_i)_{i \in I,j \in J}$ is a CONS in $H' \otimes H$.
Thus, when $H$ and $H'$ are separable, $H' \otimes H$ is separable.


For $1 \leq p < \infty$, the \emph{p-Schatten class} $S_p(H, H')$ consists 
of all compact operators $A$ from $H$ to $H'$ such that the
norm $\norm{A}_{S_p(H)} := \norm{(\sigma_i(A))_{i \in J}}_{\ell_p}$ is finite.
Here $\norm{(\sigma_i(A))_{i \in J}}_{\ell_p}$ denotes the $\ell_p$ 
sequence space norm of the sequence of the strictly positive singular values
of $A$ indexed by the countable set $J$, which we assume to be
ordered nonincreasingly.
We set $S_\infty(H, H')$ to be the class of compact operators from $H$ to $H'$ 
equipped with the operator norm and
write $S_p(H) :=S_p(H,H)$ for all $1 \leq p \leq \infty$.
The spaces $S_p(H)$ are two-sided ideals in $\bounded{H}$. 
Moreover $ \norm{A}_{S_q(H,H')} \leq \norm{A}_{S_p(H,H')}$
holds for $1 \leq p \leq q \leq \infty$, i.e., $S_p(H,H') \subseteq S_q(H,H')$.
For $p=2$, we obtain the Hilbert space of \emph{Hilbert--Schmidt operators}
from $H$ to $H'$
equipped with the inner product $\innerprod{A_1}{A_2}_{\HS{H, H'}} = \tr{A_1^*A_2}$.
For $p=1$, we obtain the Banach space
of \emph{trace class operators}. 
The Schatten classes are the completion
of \emph{finite-rank operators} (i.e., operators in 
$\mspan\{ x' \otimes x \mid x \in H, x' \in H' \}$) 
with respect to the corresponding norm.

We will make frequent use of the fact 
that the tensor product space $H' \otimes H$ can be
isometrically identified with the space of Hilbert--Schmidt
operators from $H$ to $H'$, i.e.,
we have $\HS{H, H'} \simeq H' \otimes H$. For elements
$x_1, x_2 \in H$, $x_1', x_2' \in H'$, we have the relation
$\innerprod{x_1' \otimes x_1}{x_2' \otimes x_2}_{H' \otimes H}
= \innerprod{x_1' \otimes x_1}{x_2' \otimes x_2}_{\HS{H,H'}}$,
where the tensors are interpreted as rank-one operators 
as described above. This identification of tensors
with as rank-one operators
extends to $\mspan\{ x' \otimes x \mid x \in H, x' \in H' \}$
by linearity and defines a linear isometric isomorphism
between $H' \otimes H$ and $\HS{H, H'}$, which can also be seen by
considering Hilbert--Schmidt operators in terms of 
their singular value decompositions.
We will frequently switch in between these two viewpoints when considering
Hilbert--Schmidt operators.

\subsection{Joint and regular conditional distributions}

In this paper, we will consider a second countable locally compact Hausdorff
space $(E, \sigalg_{E})$ equipped with its Borel field. We need this technical
setup to avoid dealing with measure-theoretic details later on.

We consider two random variables
$X,Y$ defined on a common probability space 
$(\Omega, \sigalg, \Pr)$ taking values in $E$.
We will assume
without loss of generality that
$(\Omega, \sigalg, \Pr)$ is rich enough to support all performed operations in this paper.
For a finite number of random variables $X_1, \dots, X_n$ defined
with values in $E$,
we write $\law(X_1, \dots, X_n)$ for the \emph{finite-dimensional law},
i.e., \emph{pushforward measure} on $(E^n, \mathcal{B}(E^n))$.
We write \smash{$X \stackrel{d}{=} Y$},
if $X$ and $Y$ are equal in distribution, i.e., their laws are equal.
Throughout this paper, we define
$\invmeas := \law(X)$ and $\nu := \law(Y)$, i.e.,
we have $X \sim \invmeas$ and $Y \sim \nu$.

Let $\tk: E \times \sigalg_{E} \rightarrow \R$ be a
\emph{Markov kernel}\footnote{
	We distinguish different notions of \emph{kernels}
	in this paper.
	We will often refer to reproducing kernels/symmetric positive semidefinite 
	kernels simply as \emph{kernel}, while the kernel $\tk$
	defining a conditional distribution
	will always be called \emph{Markov kernel}.
}, i.e.,
$\tk(x, \cdot)$ is a probability measure on $(E, \sigalg_{E}$) for every $x \in E$
and the map $E \ni x \mapsto \tk(x, \mathcal{A})$ is an $\sigalg_{E}-\R$ measurable function
for every $\mathcal{A} \in \sigalg_{E}$ such that
\begin{equation*}
	\Pr[Y \in \mathcal{A} \mid X = x] = 
	\int_{\mathcal{A}} \tk(x, \dd y) = \tk(x, \mathcal{A})
\end{equation*}
for all $x \in E$ and events $\mathcal{A} \in \sigalg_E$.
The Markov kernel $\tk$ defines a so-called
\emph{regular version}
of the above conditional distribution
which allows to consider the fiberwise disintegration
\begin{equation*}
	\Pr[X \in \mathcal{A}, Y \in \mathcal{B}]
	= \int_{\mathcal{A}}  \tk(x, \mathcal{B}) \, \dd \invmeas(x),
\end{equation*}
see \citet[Theorem 10.2.1]{Dudley}. Such a Markov kernel $p$
exists always in our scenario, since the space $E$ is Polish \citep[Theorem 10.2.2]{Dudley}.
Additionally, two regular versions
of the same conditional distribution
with corresponding Markov kernels $p,p'$
coincide almost everywhere, i.e., we have
$p(x, \cdot) = p'(x, \cdot)$ for $\invmeas$-a.e.\ $x \in E$.

Our goal is to nonparametrically estimate the
\emph{conditional expectation operator}
$\ko \colon L^2(\nu) \rightarrow L^2(\invmeas)$ defined by
\begin{equation*} 
	[\ko f](x) := \Ex[ f(Y) \mid X = x] = \int_E f(y)\, \tk(x, \dd y) ,
\end{equation*} which is a contractive linear map
(and therefore bounded). In fact, this can easily be seen
by making use of Jensen's inequality for conditional expectations
and considering
\begin{align*}
	\norm{\ko f}_{L^2(\invmeas)}^2 = 
	\Ex\left[ \Ex[ f(Y) \mid X ]^2  \right]	
	\leq \Ex\left[ \Ex[ (f(Y))^2 \mid X ] \right]
    = \Ex[ f(Y)^2  ] = \norm{f}_{L^2(\nu)}^2.
\end{align*}

\subsection{Vector-valued reproducing kernel Hilbert spaces}

We will give a brief overview of the concept of a 
\emph{vector-valued reproducing kernel Hilbert space}
(vRKHS), i.e.,
a Hilbert space consisting of
functions from a nonempty set $E$ to a Hilbert space $H$.
Since the construction of such a space is quite technical,
we will not cover all mathematical details here but
rather introduce the most important properties.
For a rigorous treatment of this topic, we refer
the reader to~\citet{Carmeli06} as well as 
\citet{Carmeli10:universality}.

\begin{definition}[Operator-valued psd kernel]
	\label{def:veckernel}
	Let $E$ be a nonempty set and $H$ be a real Hilbert space.
	A function $\veckernel: E \times E \rightarrow \bounded{H}$
	is called an \emph{operator-valued positive-semidefinite (psd)
		kernel}, if 
	$\veckernel(x,x') = \veckernel(x',x)^{*}$ and
	all $x,x' \in E$ and
	additionally for all $n \in \N$,
	$x_1, \dots, x_n \in E$ and $\alpha_1, \dots, \alpha_n \in \R$,
	we have
	\begin{equation*}
		\sum_{i,j=1}^n \alpha_i \alpha_j
		\innerprod{h}{\veckernel(x_i,x_j)h}_H \geq 0
        \quad
        \text{for all } h \in H.
	\end{equation*}
\end{definition}

Let $\veckernel: E \times E \rightarrow \bounded{H}$
be an operator-valued psd kernel.
For a fixed $x \in E$ and $h \in H$, 
we obtain a function from $E$ to $H$ via
\begin{equation*}
	[\veckernel_x h](\cdot) := \veckernel(\cdot,x)h.
\end{equation*}
We can now consider the set
\begin{equation*}
	\vecrkhs_{\mathrm{pre}} :=
	\mspan \{ \veckernel_x h \mid 
	x \in E, h \in H \}
\end{equation*}
and define an inner product on $\vecrkhs_{\mathrm{pre}}$
by linearly extending the expression
\begin{equation}
	\label{eq:vec_innerprod}
	\innerprod{ \veckernel_x h}{\veckernel_{x'} h'}_\vecrkhs
	:=
	\innerprod{ h }{ \veckernel(x,x')h'}_H.
\end{equation}
Let $\vecrkhs$ be the completion of $\vecrkhs_{\mathrm{pre}}$
with respect to this inner product.
We call $\vecrkhs$ the \emph{$H$-valued reproducing kernel Hilbert space}
or more generally the vRKHS induced by the kernel $\veckernel$.

The space $\vecrkhs$ is a Hilbert space consisting of
functions from $E$ to $H$ with the so-called
\emph{reproducing property} given by the identity
\begin{equation*}
	\innerprod{ F(x) }{ h }_H = 
	\innerprod{ F }{ \veckernel_x h}_\vecrkhs
\end{equation*} 
for all $F \in \vecrkhs$, $h \in H$ and $x \in E$.
Additionally, we have 
\begin{equation*}
	\norm{ F(x) }_H \leq 
	\norm{\veckernel(x,x)}^{1/2}
	\, \norm{ F }_\vecrkhs, \quad x \in E
\end{equation*}
for all $F \in \vecrkhs$.
When $\veckernel_x$ is understood as a linear operator
from $H$ to $\vecrkhs$ fixed $x \in E$, the
inner product given by~\eqref{eq:vec_innerprod} implies
that $\veckernel_x$ is a bounded operator for all $x \in E$.
As a result,
we can rewrite the reproducing property as
\begin{equation*}
	F(x) = \veckernel_x^* F 
\end{equation*}
for all $F \in \vecrkhs$ and $x \in E$.
Therefore we obviously have
\begin{equation*}
	\veckernel_x^* \veckernel_{x'} = \veckernel(x,x'),  \quad x,x' \in E
\end{equation*}
and the linear operators $\veckernel_x \colon \inrkhs \rightarrow \vecrkhs$
and $\veckernel_x^* \colon \vecrkhs \rightarrow \inrkhs$
are bounded with 
\begin{equation*}
	\norm{\veckernel_x} = \norm{\veckernel_x^*} = \norm{ K(x,x) }^{1/2}.
\end{equation*}

In this paper, we will deal with two very specific examples
of psd kernels, which we will introduce in what follows.

\subsubsection{$\R$-valued RKHS}
\label{sec:scalar_rkhs}
When we identify the space of linear operators on $\R$
with $\R$ itself and consider a scalar-valued psd kernel
\begin{equation*}
	k \colon E \times E \rightarrow \R
\end{equation*}
in the sense of Definition~\ref{def:veckernel},
we obtain the standard setting 
of the ($\R$-valued) reproducing 
kernel Hilbert space~(RKHS; see \citet{aronszajn50reproducing}).
The kernel $k$ satisfies $k(x,x') = k(x',x)$
for all $x,x' \in E$.
We obtain a space $\inrkhs$ consisting of functions 
from $E$ to $\R$
with the properties
\begin{enumerate}[label=(\roman*), itemsep=0ex, topsep=1ex]
	\item $ \innerprod{f}{k(x, \cdot)}_\mathscr{H} = f(x) $ for all $ f \in \mathscr{H} $ (reproducing property), and
	\item $ \mathscr{H} = \overline{\mspan\{k(x, \cdot) \mid x \in E \}}$, where
	the completion is with respect to the RKHS norm.
\end{enumerate}
It follows in particular that
$ k(x, x^\prime) = \innerprod{k(x, \cdot)}{k(x^\prime,\cdot)}_\mathscr{H} $. 
The so-called \emph{canonical feature map} $ \varphi \colon E \to \mathscr{H} $ 
is given by $ \varphi(x) := k(x, \cdot) $. 

The space $\inrkhs$ has been thoroughly examined over the last decades and 
has numerous applications in statistics, approximation theory and
machine learning. For details, we refer the reader to
\citet{Berlinet04:RKHS} and \citet{StCh08}.

\begin{remark}[Notation]
	In what follows, $\inrkhs$ will always denote
	the $\R$-valued RKHS induced by the kernel
	$k \colon E \times E \rightarrow \R$
	with corresponding canonical feature map 
	$\varphi \colon E \rightarrow \inrkhs$
	as described in this section.
	We will write small letters $f,g,h \in \inrkhs$ for
	$\R$-valued RKHS functions.
\end{remark}

\subsubsection{$\inrkhs$-valued vRKHS}
\label{sec:vecrkhs}
Let $\inrkhs$ be the $\R$-valued RKHS induced by the 
kernel $k: E \times E \rightarrow \R$ as decribed
in Section~\ref{sec:scalar_rkhs}.
Let $\id_\inrkhs$ be the identity operator on $\inrkhs$.
We define the map
$\veckernel \colon E \times E \rightarrow \bounded{\inrkhs}$
with
\begin{equation}
	\veckernel(x, x') := k(x, x') \id_\inrkhs
\end{equation}
for all $x, x' \in E$.
It is straightforward to show that $\veckernel$ is a
psd kernel and therefore induces an $\inrkhs$-valued
vRKHS $\vecrkhs$~\citep[see also][Example 3.3.(i)]{Carmeli10:universality}.

\begin{remark}[Notation]
	In what follows, $\vecrkhs$ will always denote
	the $\inrkhs$-valued vRKHS induced by the kernel 
	$\veckernel \colon E \times E \rightarrow \bounded{\inrkhs}$
	given by $\veckernel(x,x') = k(x,x') \id_\inrkhs$
	as described in this section.
	We will write capital letters $F,G,H \in \vecrkhs$ for
	$\inrkhs$-valued functions
	in order to distinguish them from real-valued functions
	$f,g,h \in \inrkhs$.
\end{remark}

\subsection{Isomorphism between $\vecrkhs$ and $\HS{\inrkhs}$}

The foundation of our approach is given
by the fact that elements of the
vRKHS $\vecrkhs$ 
defined by the kernel $K(x, x') = k(x,x') \id_\inrkhs$ can be interpreted
as Hilbert--Schmidt operators on $\inrkhs$.
We again recall that the space of Hilbert--Schmidt operators $\HS{\inrkhs}$
is isometrically isomorphic to the tensor product space $\inrkhs \otimes \inrkhs$
via an identification of rank-one operators as elementary tensors.
We will use the latter to state the result, since a formulation in this
way is more natural.

\begin{theorem}%
	[$\vecrkhs$ is isomorphic to $\inrkhs \otimes \inrkhs$]
	\label{thm:vecrkhs_isomorphism} 
	Let $\inrkhs$ be a scalar RKHS with corresponding kernel $k$.
	Let $\vecrkhs$ be the vector-valued RKHS induced by the kernel
	$K(x, x') := k(x,x') \id_\inrkhs$.
	The map $\Theta$ defined on rank-one tensors in $\inrkhs \otimes \inrkhs$
	defining an $\inrkhs$-valued function on $E$ by the relation 
	\begin{equation} 
		\label{eq:vecrkhs_isomorphism}
		\left[\Theta(f \otimes h)\right](x) 
		:= h(x) f = (f \otimes h) \varphi(x) = \innerprod{h}{\varphi(x)}_\inrkhs f
	\end{equation}
	for all $x \in E$ and $f,h \in \inrkhs$ maps to $\vecrkhs$.
	Furthermore,
	extending $\Theta$ to $\inrkhs \otimes \inrkhs$ via linearity and completion
	yields an isometric isomorphism
	between $\inrkhs \otimes \inrkhs$ and $\vecrkhs$.
\end{theorem}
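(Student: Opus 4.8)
The plan is to verify that $\Theta$ is inner-product preserving on a convenient dense subspace of $\inrkhs \otimes \inrkhs$ built from the feature map, extend it by continuity, and then establish surjectivity onto $\vecrkhs$. The guiding observation is that $\Theta$ sends the special tensors $f \otimes \varphi(x)$ exactly onto the generators of the pre-space $\vecrkhs_{\mathrm{pre}}$: using $\varphi(x) = k(x, \cdot)$ and the symmetry of $k$, the formula~\eqref{eq:vecrkhs_isomorphism} gives $[\Theta(f \otimes \varphi(x))](y) = \innerprod{\varphi(x)}{\varphi(y)}_\inrkhs f = k(y,x) f = [\veckernel_x f](y)$, so that $\Theta(f \otimes \varphi(x)) = \veckernel_x f \in \vecrkhs_{\mathrm{pre}}$.

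First I would establish the isometry on the linear span of $\{ f \otimes \varphi(x) \mid f \in \inrkhs,\, x \in E \}$. For two such tensors, the defining inner product~\eqref{eq:vec_innerprod} of $\vecrkhs$ yields $\innerprod{\veckernel_{x} f_1}{\veckernel_{x'} f_2}_\vecrkhs = \innerprod{f_1}{\veckernel(x,x') f_2}_\inrkhs = k(x,x') \innerprod{f_1}{f_2}_\inrkhs$, which is precisely $\innerprod{f_1 \otimes \varphi(x)}{f_2 \otimes \varphi(x')}_{\inrkhs \otimes \inrkhs} = \innerprod{f_1}{f_2}_\inrkhs \innerprod{\varphi(x)}{\varphi(x')}_\inrkhs$. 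By bilinearity this shows $\Theta$ preserves inner products on the span of these tensors. Since $\mspan\{\varphi(x) \mid x \in E\}$ is dense in $\inrkhs$ (property~(ii) in Section~\ref{sec:scalar_rkhs}), the span of $\{ f \otimes \varphi(x)\}$ is dense in $\inrkhs \otimes \inrkhs$; hence $\Theta$ extends uniquely to a global isometry on $\inrkhs \otimes \inrkhs$.

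The step that needs care is confirming that this abstract isometric extension really coincides with the explicit pointwise formula~\eqref{eq:vecrkhs_isomorphism} on every rank-one tensor $f \otimes h$, including those whose second factor $h$ is not a feature vector; this is where I expect the main (if mild) obstacle to lie, since the inner product of $\vecrkhs$ is only directly accessible on the generators $\veckernel_x f$. Here I would pick $h_n \in \mspan\{\varphi(x)\}$ with $h_n \to h$ in $\inrkhs$, so that $f \otimes h_n \to f \otimes h$ in $\inrkhs \otimes \inrkhs$ and, by the isometry just proved, $\Theta(f \otimes h_n)$ converges in $\vecrkhs$ to some limit $G$. The norm bound $\norm{G(x) - \Theta(f \otimes h_n)(x)}_\inrkhs \leq \norm{\veckernel(x,x)}^{1/2} \norm{G - \Theta(f \otimes h_n)}_\vecrkhs$ shows that $\vecrkhs$-convergence forces pointwise convergence, while $[\Theta(f \otimes h_n)](x) = \innerprod{h_n}{\varphi(x)}_\inrkhs f \to \innerprod{h}{\varphi(x)}_\inrkhs f$; comparing the two limits gives $G = \Theta(f \otimes h)$, so the extension agrees with the stated formula and in particular $\Theta(f \otimes h) \in \vecrkhs$ for every rank-one tensor.

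Finally I would argue surjectivity. As an isometry defined on the complete space $\inrkhs \otimes \inrkhs$, the range of $\Theta$ is closed; it contains every generator $\veckernel_x f = \Theta(f \otimes \varphi(x))$ and hence the dense subspace $\vecrkhs_{\mathrm{pre}}$, so the range is all of $\vecrkhs$. Combined with the isometry, this makes $\Theta$ an isometric isomorphism between $\inrkhs \otimes \inrkhs$ and $\vecrkhs$, and through the identification $\HS{\inrkhs} \simeq \inrkhs \otimes \inrkhs$ the interpretation of elements of $\vecrkhs$ as Hilbert--Schmidt operators on $\inrkhs$ follows.
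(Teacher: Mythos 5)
Your proposal is correct, and it follows the same basic route the paper indicates: identify the generators $\veckernel_x f$ of $\vecrkhs_{\mathrm{pre}}$ with the tensors $f \otimes \varphi(x)$, check that the inner products match there, and pass to the full spaces by density and completion. Note, however, that the paper does not actually prove this theorem itself; it defers to \citet[Proposition 3.5 and Example 3.3(i)]{Carmeli10:universality} and only sketches the inner-product correspondence after the statement. What your write-up adds over that sketch are precisely the two points that require care and that the sketch glosses over. First, you verify that the abstract isometric extension of $\Theta$ really satisfies the pointwise formula $[\Theta(f \otimes h)](x) = h(x)\, f$ for arbitrary $h \in \inrkhs$, not just for $h$ in the span of feature vectors; your argument via the bound $\norm{F(x)}_\inrkhs \leq \norm{\veckernel(x,x)}^{1/2}\norm{F}_\vecrkhs$, which turns $\vecrkhs$-convergence into pointwise convergence, is the right tool and mirrors the uniqueness argument the paper itself uses later in Corollary~\ref{cor:vecrkhs_isomorphism}. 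Second, you supply the surjectivity argument: an isometry defined on the complete space $\inrkhs \otimes \inrkhs$ has closed range, and that range contains the dense subspace $\vecrkhs_{\mathrm{pre}}$, hence equals $\vecrkhs$. Both steps are sound, so your proof stands as a valid, self-contained replacement for the external citation.
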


A proof of Theorem~\ref{thm:vecrkhs_isomorphism}
can be found in 
\citet[Proposition 3.5 \& Example 3.3(i)]{Carmeli10:universality}.  
The isometric isomorphism
\begin{equation*}
	\Theta: \inrkhs \otimes \inrkhs \rightarrow \vecrkhs
\end{equation*}
defined by~\eqref{eq:vecrkhs_isomorphism}
seems technical but actually becomes quite intuitive
when one examines how
the inner products of both spaces are connected
via the kernels $k$ and $K$.
We outline this connection briefly below.

Let $x,x' \in E$ and $h, h' \in \inrkhs$.
We define $F := K_x h \in \vecrkhs$ and $F':= K_{x'} h' \in \vecrkhs$
and note that we can express the inner product in $\vecrkhs$ as
\begin{align*}
	\innerprod{F}{F'}_\vecrkhs 
	&= \innerprod{ K_{x'}^* K_x h}{h'}_\inrkhs
	= \innerprod{k(x',x) \id_\inrkhs h}{h}_\inrkhs \\
	&= \innerprod{ \varphi(x')}{ \varphi(x) }_{\inrkhs} \, 
	\innerprod{h}{h'}_\inrkhs \\
	&= \innerprod{ h \otimes \varphi(x) }{h' \otimes \varphi(x') }_{\inrkhs \otimes \inrkhs}.
\end{align*}
This derivation
can be extended straightforwardly to a correspondence 
of vector-valued functions 
$F, F' \in \mspan \{ K_x h \, | \, x \in E, h \in \inrkhs \} \subseteq \vecrkhs$
and linear combinations of tensors in
$\{ h \otimes \varphi(x) \, | \, x \in E, h \in \inrkhs \} \subseteq \inrkhs \otimes \inrkhs$
by using bilinearity of the respective inner products.
Since both spans are dense in the associated spaces, this property can
be extended to the full spaces via completion.
We now restate Theorem~\ref{thm:vecrkhs_isomorphism} in a more
accessible way for our scenario. The formulation below shows that
pointwise evaluation of functions  in $\vecrkhs$ may be conducted
by the action of the corresponding operator in $\HS{\inrkhs}$
on the canonical feature map $\varphi$.
We will refer to this property as the \emph{operator reproducing property}.
We visualize the relations between $\inrkhs \otimes \inrkhs$,
$\HS{\inrkhs}$ and $\vecrkhs$ in Figure~\ref{fig:isomorphisms}.

\begin{corollary}[Operator reproducing property]
	\label{cor:vecrkhs_isomorphism} 
	For every function $F \in \vecrkhs$
	there exists an operator $A := \Theta^{-1}(F) \in \HS{\inrkhs}$
	such that
	\begin{equation}
		\label{eq:operator_evaluation}
		F(x) = A \varphi(x) \in \inrkhs
	\end{equation}
	for all $x \in E$ with $\norm{A}_{\HS{\inrkhs}} = \norm{F}_\vecrkhs$
	and vice versa.
	
	Conversely, for any pair $F \in \vecrkhs$ and $A \in \HS{\inrkhs}$ satisfying
	property~\eqref{eq:operator_evaluation} we have   
	$A = \Theta^{-1}(F)$.
\end{corollary}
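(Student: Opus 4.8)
The plan is to establish the identity $F(x) = A\varphi(x)$ first on a dense subspace, then extend it by continuity using that both sides depend continuously on $F$ (respectively $A$), and finally to read off the norm equality directly from Theorem~\ref{thm:vecrkhs_isomorphism} and deduce the converse from density of the feature vectors.

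First I would verify the identity on rank-one tensors. Interpreting $f \otimes h \in \inrkhs \otimes \inrkhs$ as the rank-one operator $\tilde h \mapsto \innerprod{\tilde h}{h}_\inrkhs f$ (as fixed in the preliminaries), the reproducing property $\innerprod{h}{\varphi(x)}_\inrkhs = h(x)$ gives $(f \otimes h)\varphi(x) = \innerprod{\varphi(x)}{h}_\inrkhs f = h(x)\, f$, which is exactly $[\Theta(f \otimes h)](x)$ by the defining relation~\eqref{eq:vecrkhs_isomorphism}. By linearity of $\Theta$, of the evaluation $F \mapsto F(x)$, and of $A \mapsto A\varphi(x)$, the identity $[\Theta(A)](x) = A\varphi(x)$ then extends to every finite-rank operator $A$, i.e.\ to the image of the algebraic tensor product.

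Next I would pass to the closure. The evaluation functional $F \mapsto F(x)$ is bounded on $\vecrkhs$, since $\norm{F(x)}_\inrkhs \leq \norm{\veckernel(x,x)}^{1/2}\norm{F}_\vecrkhs$; likewise $A \mapsto A\varphi(x)$ is bounded from $\HS{\inrkhs}$ to $\inrkhs$, because $\norm{A\varphi(x)}_\inrkhs \leq \norm{A}_{\bounded{\inrkhs}}\norm{\varphi(x)}_\inrkhs \leq \norm{A}_{\HS{\inrkhs}}\norm{\varphi(x)}_\inrkhs$. Given an arbitrary $A \in \HS{\inrkhs}$, I would choose finite-rank operators $A_n \to A$ in Hilbert--Schmidt norm (possible since the finite-rank operators are dense in $\HS{\inrkhs}$); the isometry $\Theta$ then yields $\Theta(A_n) \to \Theta(A) = F$ in $\vecrkhs$, so the two continuity statements force $A\varphi(x) = \lim_n A_n\varphi(x) = \lim_n [\Theta(A_n)](x) = F(x)$ for every $x \in E$. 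The norm equality $\norm{A}_{\HS{\inrkhs}} = \norm{F}_\vecrkhs$ is then immediate from the fact that $\Theta$ is an isometric isomorphism (Theorem~\ref{thm:vecrkhs_isomorphism}) together with the isometric identification $\HS{\inrkhs} \simeq \inrkhs \otimes \inrkhs$.

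Finally, the converse follows from a density argument: if $F \in \vecrkhs$ and $A \in \HS{\inrkhs}$ satisfy $F(x) = A\varphi(x)$ for all $x \in E$, then setting $A_0 := \Theta^{-1}(F)$ the forward direction gives $A_0\varphi(x) = F(x) = A\varphi(x)$, whence $(A - A_0)\varphi(x) = 0$ for all $x \in E$; since $\mspan\{\varphi(x) \mid x \in E\}$ is dense in $\inrkhs$ and $A - A_0$ is bounded, this forces $A = A_0 = \Theta^{-1}(F)$. I do not anticipate a genuine obstacle, as the whole argument is a bounded-linear-extension-from-a-dense-subspace scheme; the only point requiring care is to keep the two identifications consistent, namely that the elementary tensor $f \otimes h$ corresponds simultaneously to the rank-one operator $\tilde h \mapsto \innerprod{\tilde h}{h}_\inrkhs f$ and to the $\inrkhs$-valued function $x \mapsto h(x)\, f$, so that $\Theta$ and the tensor/operator isometry compose correctly.
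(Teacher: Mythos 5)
Your proposal is correct and follows essentially the same route as the paper: the forward direction rests on the defining relation $[\Theta(f \otimes h)](x) = (f\otimes h)\varphi(x)$ extended by linearity and continuity (which the paper compresses into ``directly follows from Theorem~\ref{thm:vecrkhs_isomorphism} and the construction of $\Theta$''), and your converse argument --- $(A - \Theta^{-1}(F))\varphi(x) = 0$ for all $x$, then density of $\mspan\{\varphi(x) \mid x \in E\}$ in $\inrkhs$ plus boundedness --- is exactly the paper's proof of uniqueness. Your explicit verification that pointwise evaluation and $A \mapsto A\varphi(x)$ are both continuous, so that the identity survives the completion, is a worthwhile elaboration of a step the paper leaves implicit, but it is not a different approach.
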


\begin{proof}
	The first assertion directly follows from Theorem~\ref{thm:vecrkhs_isomorphism} and
	the construction of $\Theta$. It remains to prove the second assertion.
	Let $F \in \vecrkhs$ and define $A:= \Theta^{-1} (F)$.
	By the first assertion, $A$ satifies \eqref{eq:operator_evaluation}.
	Assume there exists 
	$B \in \HS{\inrkhs}$ satisfying \eqref{eq:operator_evaluation}.
	Then by linearity, $A$ and $B$ coincide on $\mspan\{ \varphi(x) \, \mid \, x \in E \}$,
	which is dense in $\inrkhs$. By continuity, we therefore have $A = B$. The operator
	in $\HS{\inrkhs}$ satisfying \eqref{eq:operator_evaluation} is therefore uniquely given
	by $\Theta^{-1}(F)$.
\end{proof}

\begin{figure}[htb]
	\centering
	
	\begin{tikzcd}[column sep=12em]
		\HS{\inrkhs} \arrow[r, leftrightarrow ,"A = \sum_i \sigma_i(A)\, u_i \otimes v_i ", "\textrm{SVD}"'] 
		\arrow[rr,leftrightarrow, "\textrm{Corollary \ref{cor:vecrkhs_isomorphism}}"', bend right, "A \leftrightarrow A\varphi(\cdot)"] 
		& \inrkhs \otimes \inrkhs \arrow[r, leftrightarrow,"\textrm{Theorem~\ref{thm:vecrkhs_isomorphism}}"',
		"\Theta (u_i \otimes v_i) = v_i(\cdot)\, u_i"] 
		& \vecrkhs
	\end{tikzcd}

	
	\caption{Visualization of the isometric isomorphisms
	between $\HS{\inrkhs}$, $\inrkhs \otimes \inrkhs$ and $\vecrkhs$.
	Here, \emph{SVD} refers to the singular value decomposition
	of compact operators.}
	\label{fig:isomorphisms}
\end{figure} 

\begin{remark}[Operator reproducing property]
	Not only does Corollary~\ref{cor:vecrkhs_isomorphism} describe how functions in $\vecrkhs$ can be evaluated in terms
	of their operator analogue in $\HS{\inrkhs}$, it also implies the \emph{implicit} construction
	of $\vecrkhs$ via Hilbert--Schmidt operators acting on the RKHS $\inrkhs$.
	In particular,
	the above result shows that the space of Hilbert--Schmidt operators $\HS{\inrkhs}$
	generates the vRKHS $\vecrkhs$ via
	\begin{equation*}
		\vecrkhs = \{ F: E \rightarrow \inrkhs \mid F = A\varphi(\cdot),\, A \in \HS{\inrkhs} \}.
	\end{equation*}
	Our previous considerations show that $\vecrkhs$ is precisely the vRKHS associated with
	the vector-valued kernel $K := k \id_\inrkhs$.
\end{remark}

Corollary~\ref{cor:vecrkhs_isomorphism}
will be of central importance for our approach.
The identification of an $\inrkhs$-valued vRKHS
function in $\vecrkhs$ with a corresponding Hilbert--Schmidt operator acting on $\inrkhs$
will be used to bridge the gap between vector-valued statistical 
learning theory and the nonparametric estimation of 
linear operators~\citep{Gruen13}.

\subsection{Assumptions on $\inrkhs$}
\label{sec:assumptions}

We impose some technical requirements
on the RKHS $\inrkhs$ and the corresponding kernel $k$.
Our first three assumptions allow that
we can perform Bochner integration 
without being caught up in measurability and integrability issues
later on~\citep{DiestelUhl1977}.
The fourth and the fifth assumption
are needed to ensure that $\inrkhs$ supplies
the typically used approximation qualities in a function
space context.

\begin{assump}[Separability] \label{ass:separability}
	The RKHS $\mathscr{H}$ is separable. Note that for a Polish space $E$, 
	the RKHSs induced by a continuous kernel $k \colon E \times E \rightarrow \R$ is 
	always separable~\citep[Lemma 4.33]{StCh08}.
	For a more general treatment of conditions implying separability, 
    see~\citet{OwhadiScovel2017}.
\end{assump}

\begin{assump}[Measurability] \label{ass:measurability}
	The canonical feature map $\varphi \colon E \rightarrow \inrkhs$ is $\sigalg_E-\sigalg_\inrkhs$ measurable.
	This is the case when $k(x,\cdot) \colon E \rightarrow \R$
	is $\sigalg_E-\sigalg_\R$ measurable for all $x \in E$.
	If this condition holds, then additionally all functions $f \in \inrkhs$ are $\sigalg_E-\sigalg_\R$ measurable 
	and $k \colon E \times E \rightarrow \R$ is $\sigalg_E^{\otimes 2}-\sigalg_\R$ measurable~\citep[Lemmas 4.24 and 4.25]{StCh08}.
\end{assump}

\begin{assump}[Existence of second moments] \label{ass:second_moment}
	We have $\varphi \in L^2(E, \sigalg_E, \invmeas; \inrkhs)$
	as well as $\varphi \in L^2(E, \sigalg_E, \nu; \inrkhs)$.	
	Note that this is equivalent to $\Ex[ \norm{\varphi(X)}_\inrkhs^2  ] < \infty$
	and $\Ex[ \norm{\varphi(Y)}_\inrkhs^2  ] < \infty$
	which trivially holds
	for all probability measures $\invmeas, \nu$ on $(E, \sigalg_{E})$ 
	case whenever $\sup_{x \in E} k(x,x) < \infty$.
\end{assump}

\begin{assump}[$C_0$-kernel]
	\label{ass:c0}
	We assume that $\inrkhs \subseteq C_0(E)$, where
	$C_0(E)$ is the space of continuous real-valued
	 functions on $E$ vanishing at infinity.
	 In particular, this is the case if
	 $x \mapsto k(x,x)$ is bounded on $E$ and
	 $k(x,\cdot) \in C_0(E)$ for all $x \in E$ 
	 \citep[Proposition 2.2]{Carmeli10:universality}.
\end{assump}

\begin{assump}[$L^2$-universal kernel, see
	 Section~\ref{sec:integral_operators}]
	\label{ass:l2}
	We assume that $\inrkhs$ is dense in $L^2(\invmeas)$. In this case, the kernel $k$
	and the RKHS $\inrkhs$ are called $L^2$-\emph{universal}
	 \citep{Carmeli10:universality, SFL11}.
\end{assump}

\begin{remark}
	\label{rem:assumptions}
	Since not all of our results will need all of the above assumptions, 
	we
	collect some general implications of the different assumptions here.
	\begin{enumerate}
	\item Assumptions~\ref{ass:separability}--\ref{ass:second_moment} 
	ensures that
	$\inrkhs$ can be continuously embedded into both $L^2(\invmeas)$ and $L^2(\nu)$ 
	(see Section~\ref{sec:integral_operators}).
	\item The combination
	of Assumption~\ref{ass:c0} and
	Assumption~\ref{ass:l2} implies that $\inrkhs$ is even dense in $L^2(\nu)$
	for all probability measures $\nu$ on $(E, \sigalg_{E})$
	\citep[Theorem 4.1 and Corollary 4.2]{Carmeli10:universality}.

	\item Instead of Assumption~\ref{ass:l2}, it is sometimes
	required in the literature 
	that $\inrkhs$ is dense in $C_0(E)$ with respect 
	to the supremum norm.
	This property is usually called $C_0$\emph{-universality}.
	One can show that when
	Assumption~\ref{ass:c0} holds,
	$C_0$-universality is equivalent to
	$L^2$-universality~\citep{SFL11}.
	\item When Assumptions~\ref{ass:separability}--\ref{ass:l2} 
	are satisfied,
	then the vRKHS $\vecrkhs$ induced by the kernel
	$K = k \id_{\inrkhs}$
	 is dense in both 
	$L^2(E, \sigalg_{E}, \invmeas; \inrkhs)$
	and $L^2(E, \sigalg_{E}, \nu; \inrkhs)$
    \citep[Example 6.3 \& Theorem 4.1]{Carmeli10:universality}.
	This is important for us, as we  will make use of this fact later on.

	\end{enumerate}
\end{remark}

\begin{example}
	For $E \subseteq \R^d$, well-known translation invariant kernels
	such as the \emph{Gaussian kernel} or \emph{Laplacian kernel}
	satisfy 
	all of the above assumptions for arbitrary
	probability measures $\invmeas, \nu$ on $(E, \sigalg_{E})$~\citep{SFL11}.
\end{example}

\subsection{Integral operators and $L^2$-inclusions}
\label{sec:integral_operators}

The Assumptions~\ref{ass:separability}--\ref{ass:second_moment}
imply that $\inrkhs$
can be embedded into spaces of
square integrable functions. 
This fact and its connections to
integral operators defined by the corresponding kernels
plays a fundamental role in learning theory.

\subsubsection{Real-valued RKHS}
We begin with general statements for the scalar kernel 
$k$~\citep[Chapter 4.3]{ StCh08}.
Let the Assumptions~\ref{ass:separability}--\ref{ass:second_moment} be satisfied.
The \emph{inclusion operator} $\inrkhsi_\invmeas: 
\inrkhs \rightarrow L^2(\invmeas)$
given by $ f \mapsto [f]_\sim \in L^2(\invmeas)$
identifies $f \in \inrkhs$ with its equivalence class of $\invmeas$-a.e.\ defined
functions in $L^2(\invmeas)$.
It is bounded with 
$\norm{\inrkhsi_\invmeas} \leq \norm{ \varphi }_{L^2(E,\sigalg_E,\invmeas;\inrkhs)}$
and Hilbert--Schmidt.
The adjoint of $\inrkhsi_\invmeas$ is
the integral operator 
$\inrkhsi^*_\invmeas \colon L^2(\invmeas) \rightarrow \inrkhs$
given by
\begin{equation*}
	[\inrkhsi_\invmeas^* f] (x) = \int_E k(x,x') f (x') \, \dd \invmeas(x'),
	\quad f \in L^2(\invmeas).
\end{equation*}
The kernel $k$ is $L^2$-universal if and only if $\inrkhsi_\invmeas^*$ is injective.

The operator 
$\cov[XX] := \inrkhsi_\invmeas^* \inrkhsi_\invmeas \colon \inrkhs \rightarrow \inrkhs$ is the
\emph{kernel covariance operator}
associated with the measure $\invmeas$
given by
\begin{equation*}
	\cov[XX] = \int_E \varphi(x) \otimes \varphi(x) \, \dd \invmeas(x) 
	= \Ex[ \varphi(X) \otimes \varphi(X) ],
\end{equation*} where the integral converges in trace norm.
We define all of the above concepts analogously for
the measure $\nu$ and the corresponding random variable~$Y$.
The \emph{kernel cross-covariance operator}~\citep{Baker1973}
of $X$ and $Y$
is the trace class operator given~by
\begin{equation*}
	\cov[YX] := \iint_{E \times E} \varphi(y) \otimes \varphi(x) \, p(x, \dd y) \dd \invmeas(x)
	= \Ex[ \varphi(Y) \otimes \varphi(X)] .
\end{equation*}
Both operators satisfy 
$\innerprod{h}{\cov[XX] f}_\inrkhs = \innerprod{h}{f}_{L^2(\invmeas)} = \Ex[f(X) h(X)]$
as well as $ \innerprod{h}{\cov[YX] f}_\inrkhs = \Ex[f(X) h(Y)]$
for all $f,h \in \inrkhs$.

\begin{remark}[Scalar RKHSs and integral operators]
	Although the operators 
    $\inrkhsi_\invmeas^*: L^2(\invmeas) \rightarrow \inrkhs$, 
	$\inrkhsi_\invmeas \inrkhsi_\invmeas^*: L^2(\invmeas) \rightarrow L^2(\invmeas)$ and
	$\cov[XX]: \inrkhs \rightarrow \inrkhs$ have the same 
	analytical expression as integral
	operators, they are fundamentally different objects since
	they operate on different spaces.
	However, $\inrkhsi_\invmeas \inrkhsi_\invmeas^*$ and $\cov[XX]$ 
	share the same nonzero 
	eigenvalues and their eigenfunctions can be 
	related~\citep{RBD10}.
\end{remark}

\subsubsection{Vector-valued RKHS} 
\label{sec:vecrkhs_integral_operators}
Similarly to the above operators defined for the scalar kernel $k$,
we can define the above concepts for the vector-valued kernel 
$K = k \id_{\inrkhs}$ in the context
of Bochner spaces \citep{Carmeli06, Carmeli10:universality}. 

When Assumptions~\ref{ass:separability}--\ref{ass:second_moment}
are satisfied,
the space $\vecrkhs$ is separable.
The elements of $\vecrkhs$ are
$\sigalg_E -\sigalg_\inrkhs$ measurable functions.
Additionally, they are Bochner square integrable w.r.t. $\invmeas$.
The inclusion operator 
$\vecrkhsi_\invmeas \colon \vecrkhs \rightarrow L^2(E,\sigalg_E,\invmeas;\inrkhs)$
given by $F \mapsto [F]_\sim $ is bounded with
$\norm{\vecrkhsi_\invmeas} \leq \norm{ \varphi }_{L^2(E,\sigalg_E,\invmeas;\inrkhs)}$.

The adjoint of $\vecrkhsi_\invmeas$ is
the integral operator 
$\vecrkhsi_\invmeas^* \colon L^2(E, \sigalg_E, \invmeas, \inrkhs) \rightarrow \vecrkhs$
given by
\begin{equation*}
	[\vecrkhsi_\invmeas^* F] (x) = \int_E \veckernel(x,x') F (x') \, \dd \invmeas(x'),
	\quad F \in L^2(E, \sigalg_E, \invmeas, \inrkhs).
\end{equation*}
The operator 
$\vecrkhscov := \vecrkhsi_\invmeas^* \vecrkhsi_\invmeas \colon \vecrkhs \rightarrow \vecrkhs$
is the \emph{generalized covariance operator}~(also called 
\emph{frame operator}, \citet{Carmeli06})
associated with the measure $\invmeas$
given by
\begin{equation*}
	\vecrkhscov F
	= \int_E K_x K_x^* F \, \dd \invmeas(x)
\end{equation*}
for all $F \in \vecrkhs$. $\vecrkhscov$ is bounded.

The following example shows that the generalized covariance operator 
$\vecrkhscov$
associated with $K(x,x') = k(x,x') \id_\inrkhs$ 
is noncompact in general. 
For more details, we refer the reader to recent recent
results by \citet{MollenhauerEtAl2022} characterising
the spectum of $\vecrkhscov$.

\begin{example}[Noncompact generalized covariance operator $\vecrkhscov$]
	\label{ex:noncompact} It is easy to
	see that for commonly used radial kernels $k$ such as the Gaussian
	kernel on $E \subseteq \R^d$, the generalized covariance operator
	$\vecrkhscov$ is never compact.\\
	Consider a measurable kernel
	$k: E \times E \rightarrow \R$ which induces an infinite-dimensional RKHS
	$\inrkhs$
	satisfying Assumptions~\ref{ass:separability} and \ref{ass:measurability}. 
	Assume $k(x,y) > 0$ for all $x,y \in E$ and  $k(x,x) = 1$ for all $x \in E$.
	Let $K = k \id_\inrkhs$ and $(e_i)_{i \in \N} \subset \inrkhs$ be an ONS. 
	We fix some $x' \in E$ and
	define $F_i := K_{x'} e_i \in \vecrkhs$ for all $i \in \N$. Note that we have
	\begin{equation*}
		\innerprod{K_{x'} e_i }{K_{x'} e_j }_\vecrkhs = 
		\innerprod{k(x',\cdot) e_i}{k(x', \cdot) e_j}_\vecrkhs 
        = k(x', x') \innerprod{e_i}{e_j}_\inrkhs = \delta_{ij},
	\end{equation*}   
	i.e., $(F_i)_{i \in \N} $ is an ONS in $\vecrkhs$.
	Then it is possible to show that $(\vecrkhscov F_i)_{ i \in \N}$ 
	consists of orthogonal elements of the same length:
	\begin{align*}
		\innerprod{ \vecrkhscov F_i}{\vecrkhscov F_j}_\vecrkhs
		&=
		\innerprod{\int_E K_x F_i(x) \dd \invmeas(x) }{ \int_E K_x F_j(x) \dd \invmeas(x) }_\vecrkhs \\
		&=
		\innerprod{\int_E k(x', x) \, K_x e_i \dd \invmeas(x) }{ \int_E k(x', x) \, K_x e_j  \dd \invmeas(x) }_\vecrkhs \\
		&= 
		\iint_{ E^2 } k(x',x) k(x',y) \innerprod{  K^*_y K_x e_i }{ e_j}_\inrkhs	\dd [\invmeas \otimes \invmeas] (x,y) \\
		&= 	\iint_{ E^2 } k(x',x) k(x',y) k(x,y) \innerprod{ e_i }{ e_j}_\inrkhs	\dd [\invmeas \otimes \invmeas] (x,y)  = M \delta_{ij}
	\end{align*}
	with the constant $M := \iint_{ E^2 } k(x',x) k(x',y) k(x,y) \,	\dd [\invmeas \otimes \invmeas] (x,y) >0$, which is
	independent of $i,j \in \N$.
	Consequently, we
	have
	$ \norm{\vecrkhscov F_i - \vecrkhscov F_j}^2_\vecrkhs  =  
	\norm{\vecrkhscov F_i }^2_\vecrkhs + \norm{\vecrkhscov F_j }^2_\vecrkhs = 2M$
	for all $i \neq j$,
	i.e., no subsequence of $(\vecrkhscov F_i)_{ i \in \N}$ can be Cauchy.
	We therefore have constructed a bounded sequence $ (F_i)_{i \in \N} $ in $\vecrkhs$
	such that $ (\vecrkhscov F_i)_{i \in \N} $ does not contain a convergent subsequence in $\vecrkhs$, implying that $\vecrkhscov$
	is not compact.
\end{example}


\subsection{Conditional mean embeddings and regression function}

Under Assumptions~\ref{ass:separability}--\ref{ass:second_moment},
the Bochner integrability of the feature map $\varphi: E \rightarrow \inrkhs$ 
can be elegantly used
in combination with the reproducing property of $\inrkhs$
to express expectation operations via simple linear algebra.

In particular, the \emph{kernel mean embedding}~\citep{Smola07Hilbert} 
of the probability measure $\invmeas$
defined by the Bochner expectation
\begin{equation}
	\label{eq:KME}
	\mu_\invmeas:  = \int_E \varphi(x) \, \dd \invmeas(x) = \Ex[ \varphi(X) ] \in \inrkhs
\end{equation}
naturally satisfies the expectation reproducing property
\begin{equation}
	\label{eq:expectation_reproducing}
	\Ex[ f (X)] 
    =  \Ex\left[ \innerprod{f}{ \varphi(X) }_\inrkhs \right] 
    = \innerprod{f}{ \mu_\invmeas}_\inrkhs \quad \textnormal{for all } f \in \inrkhs.
\end{equation}
We call the RKHS $\inrkhs$ (or equivalently the corresponding kernel $k$)
\emph{characteristic}, if the
\emph{mean embedding map} 
\begin{equation*}
	\pi \mapsto \int_E \varphi(x) \, \dd \invmeas(x) = \mu_\pi \in \inrkhs
\end{equation*}
defined on 
all probability measures on $(E, \sigalg_E)$ is injective. 

\begin{remark}[The RKHS $\inrkhs$ is characteristic]
	\label{rem:characteristic}
	Our Assumptions~\ref{ass:c0} and~\ref{ass:l2} imply
	that $\inrkhs$ is characteristic~\citep{Carmeli10:universality, Sriperumbudur10a,SFL11}.
\end{remark}

For two probability measures $\invmeas, \nu$ on $(E, \sigalg_{E})$,
the so-called \emph{maximum mean discrepancy} (MMD) is defined by
\begin{equation*}
	\mmd(\invmeas, \nu) := 
	\sup_{\substack{f \in \inrkhs \\ \norm{f}_\inrkhs \leq 1}} 
	\abs{ \int_E f(x) \dd \invmeas(x) - \int_E f(x) \dd \nu(x) }
	= \norm{ \mu_\invmeas - \mu_\nu}_\inrkhs.
\end{equation*}
For characteristic kernels, the MMD
constitutes a metric on the set of probability measures on $(E, \sigalg_{E})$. 
This fact has been used as a powerful tool in 
RKHS-based inference~\citep{GrettonTwoSample12, Sejdinovic2013}.

Transferring \eqref{eq:KME} to a regular
conditional distribution of $Y$ given $X$, we define
$\inrkhs$-valued \emph{conditional mean embedding} (CME) function
\citep{Park2020MeasureTheoretic}
\begin{equation*}
	\Fstar(x) := \int_E \varphi(y)  \, \tk(x, \dd y) 
	= \Ex[ \varphi(Y) \mid X = x] \in L^2(E, \sigalg_{E}, \invmeas; \inrkhs)
\end{equation*}
 and obtain a pointwise conditional version of the expectation 
 reproducing property~\eqref{eq:expectation_reproducing} as
\begin{equation}
	\tag{CME}
	\label{eq:CME_property}
	\Ex[ f(Y) \mid  X = x ] = \innerprod{ f }{\Fstar(x)}_{\inrkhs}
	\quad \textnormal{for all } f \in \inrkhs \textnormal{ and } x \in E.
\end{equation}
The fact that $\Fstar$ (or analogously any other regular version
of $\Ex[\varphi(Y) \mid X = \cdot ]$) is a well-defined
element in $L^2(E, \sigalg_{E}, \invmeas; \inrkhs)$
can be seen by using Jensen's inequality for conditional Bochner expectations
as
\begin{align*}
\norm{\Fstar}^2_{L^2(E, \sigalg_{E}, \invmeas; \inrkhs)} &= 
\int_E \norm{ \Fstar(x)  }_\inrkhs^2 \dd \invmeas(x) \\
&\leq \iint_{E \times E } \norm{ \varphi(y)  }_\inrkhs^2  \tk(x, \dd y) \dd \invmeas(x) 
= \Ex[ \norm{\varphi(Y)}_\inrkhs^2 ] < \infty.
\end{align*}
together with Assumption~\ref{ass:second_moment}.

The approximation of $\Fstar$ is a key concept in a wide variety
of models for kernel-based inference. If $\cov[XX]$ is injective,
\citet{SHSF09} and \citet{Fukumizu13:KBR} show that under the assumption
\begin{equation}
	\label{eq:cme_assumption}
	\Ex[ f(Y) \, \mid \, X = \cdot ] = \innerprod{f}{ \Fstar(\cdot)}_\inrkhs   \in \inrkhs
	\quad \textnormal{for all } f \in \inrkhs,
\end{equation}
we have a closed form expression of $ \Fstar$
via 
\begin{equation}
	\label{eq:cme}
	\Fstar(x) = \cov[YX] \cov[XX]^\dagger \varphi(x)
\end{equation} for all $x \in E$ such that $ \varphi(x) \in \range(\cov[XX])$.
Here, the (generally unbounded and not globally defined) operator
$\cov[XX]^\dagger \colon: \range(\cov[XX]) + \range(\cov[XX])^\perp \rightarrow \inrkhs$
is the \emph{Moore--Penrose pseudoinverse} of 
$\cov[XX]$ (see \citet{EHN96}).
The assumption \eqref{eq:cme_assumption} is generally not satisfied as
shown by \citet{Klebanov2019rigorous} in a detailed investigation.
\citet{Gruen12} and \citet{Park2020MeasureTheoretic} show that 
a Tikhonov--Phillips regularized version of the estimate of \eqref{eq:cme}
can be understood as an empirical approximation of $\Fstar$ with functions
in $\vecrkhs$ in a least squares regression context. However, 
no approximation qualities of the CME 
in the $L^2$-operator context are considered.
We will now extend this theory and connect it to the CME
regression model later on.

\section{Nonparametric approximation of  $\ko$}
\label{sec:nonparametric_approximation}
We now restate the main results
from Section~\ref{sec:main_results} with detailed
assumptions and
provide their proofs.
Furthermore, we investigate the connections
of the approximation of $\ko$ over functions in $\inrkhs$
to the maximum mean discrepancy
and regularized least squares regression.

\subsection{Proofs of main results}
 We begin with the proof of
Theorem~\ref{thm:regression_viewpoint},
as it constitutes the theoretical foundation for
our remaining work.
We note that this result can also be interpreted 
as an improvement of a surrogate risk bound derived by
\citet[Section 3.1]{Gruen12} and later on used by \citet{Park2020MeasureTheoretic}
to approximate the CME. We will elaborate on this fact in more
detail later on
(see Section~\ref{sec:least_squares_regression} 
and Remark~\ref{rem:surrogate_risk} in particular). 

\begin{reptheorem}{thm:regression_viewpoint}%
	[Regression and conditional mean approximation]
	Under the Assumptions~\ref{ass:separability}--\ref{ass:second_moment},
	we have for every operator $A \in \HS{\inrkhs}$ that
	\begin{equation*}
		\norm{A - \ko}^2_{\inrkhs \rightarrow L^2(\invmeas)} \leq
		\Ex\left[ \norm{ \Fstar(X) - A^*\varphi(X)}_\inrkhs^2   \right] = 
		\norm{ \Fstar - A^*\varphi(\cdot)  }_{ L^2(E, \sigalg_{E}, \invmeas; \inrkhs) }^2.
	\end{equation*}
	The given bound is sharp.
\end{reptheorem}

\begin{proof}
Let  $A \in \HS{\inrkhs}$. We have
\begin{align*}
	\norm{A - P}^2_{\inrkhs \rightarrow L^2(\invmeas)} &= 
	\sup_{\norm{f}_\inrkhs = 1} \norm{Af - Pf}_{L^2(\invmeas)}^2 \\
	&= \sup_{\norm{f}_\inrkhs = 1} 
	\norm{ [Af](\cdot) - \Ex[ f(Y) \mid X = \cdot ]}_{L^2(\invmeas)}^2 \\
	&= \sup_{\norm{f}_\inrkhs = 1} 
	\norm{  
		\innerprod{Af}{\varphi(\cdot)}_\inrkhs - \innerprod{f}{\Fstar(\cdot)}_\inrkhs
	}_{L^2(\invmeas)}^2 \\
	&= \sup_{\norm{f}_\inrkhs = 1} 
	\norm{  
		\innerprod{f}{ A^*\varphi(\cdot) - \Fstar(\cdot)}_\inrkhs
	}_{L^2(\invmeas)}^2 \\
	&= \sup_{\norm{f}_\inrkhs = 1} 
	\Ex\left[
		\innerprod{f}{ A^*\varphi(X) - \Fstar(X)}_\inrkhs^2
	\right] \\
	&\leq \sup_{\norm{f}_\inrkhs = 1} 
	\Ex\left[
		\norm{f}^2_\inrkhs \norm{ A^*\varphi(X) - \Fstar(X)}_\inrkhs^2
	\right] \\
	&= 
	\Ex\left[
	\norm{ A^*\varphi(X) - \Fstar(X)}_\inrkhs^2
	\right] = \norm{ A^*\varphi(\cdot) - \Fstar  }^2_{L^2(E, \sigalg_{E}, \invmeas; \inrkhs )}, \\
\end{align*}
where we use the reproducing property
in $\inrkhs$ in the third
equality and the Cauchy--Schwarz inequality. It is clear that
the above bound is sharp by considering the case that
we have $\Pr$-a.e.\
$A^*\varphi(X) - \Fstar(X) = h $
for some constant $h \in \inrkhs$.
In this case the above bound
is attained when we choose
$f =  h / \norm{h}_\inrkhs$ in the
supremum.
\end{proof}

\begin{reptheorem}{thm:nonparametric_approximation}%
	[Approximation by Hilbert--Schmidt operators]
	Let Assumptions~\ref{ass:separability}-\ref{ass:l2} be satisfied.
	Then for every $\delta > 0$, 
	there exists a Hilbert--Schmidt operator $A \colon \inrkhs \rightarrow \inrkhs$, such that
	\begin{equation}
		\norm{A - \ko}_{\inrkhs \rightarrow L^2(\invmeas)} < \delta.
	\end{equation}
\end{reptheorem}

\begin{proof}
	By Corollary~\ref{cor:vecrkhs_isomorphism}, every operator
	 $A^* \in \HS{\inrkhs}$ corresponds to a function
	$F \in \vecrkhs$ via $F(x) = A^*\varphi(x)$ for all $x \in E$ and vice versa.	
	The space $\vecrkhs$ is densely embedded into $L^2(E, \sigalg_{E}, \invmeas; \inrkhs)$
	by Remark~\ref{rem:assumptions}(4).
	For every $\delta>0$ we therefore have
	an operator $A^* \in \HS{\inrkhs}$
	such that the bound
	$\norm{A^*\varphi(\cdot) - \Fstar}_{L^2(E, \sigalg_{E}, \invmeas; \inrkhs )}^2 = 
	\norm{F - \Fstar}_{L^2(E, \sigalg_{E}, \invmeas; \inrkhs )}^2 < \delta$
	holds.
	Together with the bound obtained in Theorem~\ref{thm:regression_viewpoint},
	this proves the assertion.
\end{proof}

\begin{repcorollary}{cor:nonparametric_approximation}[]
	Let Assumptions~\ref{ass:separability}-\ref{ass:l2} be satisfied.
	Then there exists a sequence of finite-rank operators $(A_n)_{n \in \N}$
	from $\inrkhs$ to $\inrkhs$ such that 
	$\norm{A_n - \ko}_{\inrkhs \rightarrow L^2(\invmeas)} \to 0$ as $n \to \infty$.
\end{repcorollary}

\begin{proof}
	Let $\delta > 0$. By the fact that
	the finite-rank operators
	on $\inrkhs$ are dense in
	$\HS{\inrkhs}$ and Theorem~\ref{thm:regression_viewpoint},
	we can choose $A \in \HS{\inrkhs}$ as well 
    as a finite-rank operator $A_n$ on $\inrkhs$ such that
	\begin{align*}
		\norm{A_n - \ko}_{\inrkhs \rightarrow L^2(\invmeas)} 
		&\leq \norm{A - \ko}_{\inrkhs \rightarrow L^2(\invmeas)} + \norm{\inrkhsi_\invmeas} \norm{ A_n - A  }_{\inrkhs \rightarrow \inrkhs}
		\\
		&\leq \norm{A - \ko}_{\inrkhs \rightarrow L^2(\invmeas)} + \norm{\inrkhsi_\invmeas} \norm{ A_n - A  }_{\HS{\inrkhs}} < \frac{\delta}{2} + \frac{\delta}{2}.
	\end{align*}
\end{proof}

\subsection{Measure-theoretic implications of the approximation of $\ko$}

When $\inrkhs$ is characteristic,
$\ko: \inrkhs \rightarrow L^2(\invmeas)$ uniquely
determines the conditional distribution $\tk(x, \cdot)$ for 
$\invmeas$-a.e.\ $x \in E$ (that is, up to a choice of a regular version
of the underlying conditional expectation).
This underlines that the conditional expectation operator 
$\ko$ interpreted as an 
operator with the domain $\inrkhs$ instead of $L^2(\nu)$
still captures sufficient information about the underlying joint 
distribution of $X$ and $Y$.
More generally, an approximation
of $\ko$ naturally yields a weighted approximation of 
the associated Markov kernel $\tk$ in the MMD. This
may provide a foundation for the adaptation of MMD-based hypothesis tests
for Markov kernels.

In particular, the following results shows that
$\norm{ \ko - \ko' }_{S_2(\inrkhs, L^2(\invmeas))}^2$
can equivalently be interpreted as the
squared $L^2(E, \sigalg_{E}, \invmeas; \inrkhs)$ distance between
the two conditional mean embeddings 
$\mu_{p(x, \cdot)} = \Fstar(x) = \int_E \varphi(y) \, \tk (x, \dd y)$ and
$\mu_{p'(x, \cdot)} = \Fstar'(x) = \int_E \varphi(y) \, \tk' (x, \dd y)$.

\begin{theorem}[Approximation in MMD]
	\label{thm:approximation_in_mmd}
	Let Assumptions~\ref{ass:separability}--\ref{ass:second_moment}
	be satisfied.
	Let $\ko, \ko': \inrkhs \rightarrow L^2(\invmeas)$ be two
	conditional expectation operators associated with
	the Markov kernels $\tk, \tk' : E \times \sigalg_{E} \rightarrow \R$.
	Then we have
	\begin{equation*}
		\norm{ \ko - \ko' }_{S_2(\inrkhs,L^2(\invmeas))}^2 =
		\int_E \mmd (\tk(x, \cdot), \tk'(x,\cdot) )^2 \, \dd \invmeas(x).
	\end{equation*}
\end{theorem}

\begin{proof}
    Let $(e_i)_{i \in I}$ be a CONS in $\inrkhs$.
    We have
    \begin{align*}
        \norm{ \ko - \ko' }^2_{S_2(\inrkhs, L^2(\invmeas))}
        &= 
        \sum_{i \in I} \norm{ [\ko - \ko'] e_i }^2_{L^2(\invmeas)}
        =
        \sum_{i \in I}
        \int_E \, 
        \left(
        \int_E e_i(y) \, [ \tk - \tk' ](x, \dd y) \right)^2
        \dd \invmeas(x) \\
        &=
        \int_E \, 
        \sum_{i \in I}
        \left(
        \int_E \innerprod{e_i}{\varphi(y)}_\inrkhs \, [ \tk - \tk' ](x, \dd y) \right)^2
        \dd \invmeas(x) \\
        &= 
        \int_E \, 
        \sum_{i \in I}
        \innerprod{e_i}{\mu_{p(x, \cdot)} - \mu_{p'(x, \cdot)}  }^2_\inrkhs 
        \dd \invmeas(x) \\
         &=
		\int_E \mmd (\tk(x, \cdot), \tk'(x,\cdot) )^2 \, \dd \invmeas(x),
    \end{align*}
    where we use the reproducing property in $\inrkhs$ and
    Parseval's identity.
\end{proof}

\begin{remark}[Assumptions of Theorem~\ref{thm:approximation_in_mmd}]
    Note that we do not explicitly assume
	that the underlying random variables 
	associated with $\ko$ and $\ko'$ are distributed with respect to
	the marginals $\invmeas$ and $\nu$. To show the above
	statement, it is sufficient that both
	operators are well-defined and Hilbert--Schmidt
	when the domain and image space and domain are chosen to be
	$\inrkhs$ and $L^2(\invmeas)$ (see Remark~\ref{rem:hs}).
\end{remark}

When $\inrkhs$ is characteristic, we immediately
obtain the following result. It
shows that conditional expectation operators on $\inrkhs$
determine the conditional distribution of the associated random variables
uniquely (up to a choice of a regular version).

\begin{corollary}
	\label{cor:markov_kernel_equivalence}
	Let Assumptions~\ref{ass:separability}--\ref{ass:second_moment}
	be satisfied and $\inrkhs$ be characteristic.
	With the notation of Theorem~\ref{thm:approximation_in_mmd}, we have 
	$\norm{ \ko - \ko' }_{S_2(\inrkhs, L^2(\invmeas))} = 0 $
	if and only if $\tk(x, \cdot) = \tk'(x, \cdot)$ for $\invmeas$-a.e.\ $x \in E$.
\end{corollary}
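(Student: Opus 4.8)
The plan is to read the claim off directly from Theorem~\ref{thm:approximation_in_mmd}, which is exactly the tool tailored to this situation. First I would invoke that identity, valid under Assumptions~\ref{ass:separability}--\ref{ass:second_moment}, to obtain
\[
\norm{\ko - \ko'}_{\inrkhs \rightarrow L^2(\invmeas)}^2 = \int_E \mmd(\tk(x,\cdot), \tk'(x,\cdot))^2 \, \dd\invmeas(x).
\]
The right-hand side is the integral of a nonnegative function against the probability measure $\invmeas$, so it vanishes if and only if the integrand $x \mapsto \mmd(\tk(x,\cdot),\tk'(x,\cdot))^2$ equals zero for $\invmeas$-almost every $x \in E$. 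This reduces the hypothesis $\norm{\ko - \ko'}_{\inrkhs \rightarrow L^2(\invmeas)} = 0$ to the pointwise-a.e.\ statement $\mmd(\tk(x,\cdot),\tk'(x,\cdot)) = 0$ for $\invmeas$-a.e.\ $x$.

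Second I would bring in the assumption that $\inrkhs$ is characteristic. By definition (cf.\ the discussion around Remark~\ref{rem:characteristic}), this is precisely the injectivity of the mean embedding map, which is equivalent to the MMD being a genuine metric on the set of probability measures on $(E,\sigalg_E)$. Hence, for each fixed $x$, the equality $\mmd(\tk(x,\cdot),\tk'(x,\cdot)) = 0$ holds if and only if $\tk(x,\cdot) = \tk'(x,\cdot)$ as probability measures. Combining this with the a.e.\ reduction of the previous step delivers both directions of the stated equivalence: $\norm{\ko - \ko'}_{\inrkhs \rightarrow L^2(\invmeas)} = 0$ if and only if $\tk(x,\cdot) = \tk'(x,\cdot)$ for $\invmeas$-a.e.\ $x \in E$.

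The argument is essentially immediate once Theorem~\ref{thm:approximation_in_mmd} is available; the only point deserving a word of care is the passage between the vanishing of the integral and the almost-everywhere vanishing of the integrand, which is the standard fact that a nonnegative integrable function has zero integral exactly when it is zero a.e. I do not expect a genuine obstacle here: measurability of $x \mapsto \mmd(\tk(x,\cdot),\tk'(x,\cdot))^2$ is already implicit in the very validity of the integral expression asserted by Theorem~\ref{thm:approximation_in_mmd}, so no additional measurability bookkeeping is needed.
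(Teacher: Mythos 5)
Your proof is correct and is exactly the argument the paper intends: the corollary is stated as an immediate consequence of Theorem~\ref{thm:approximation_in_mmd}, and your two steps (vanishing of a nonnegative integrand $\invmeas$-a.e., then injectivity of the mean embedding for characteristic $\inrkhs$ to upgrade $\mmd = 0$ to equality of the measures $\tk(x,\cdot)$ and $\tk'(x,\cdot)$) are precisely how that implication goes.
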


Corollary~\ref{cor:markov_kernel_equivalence} also implies that
the joint distributions for the class
of pairs of random variables $X,Y$ with a fixed marginal $X \sim \invmeas$
are uniquely determined by $\ko : \inrkhs \rightarrow L^2(\invmeas)$.

\begin{corollary}
	Let $X,X',Y,Y'$ be random variables defined on
	$(\Omega, \sigalg, \Pr)$ taking values in $(E, \sigalg_{E})$
	such that $X \sim \invmeas$ and $X' \sim \invmeas$ and
	Assumptions~\ref{ass:separability}--\ref{ass:second_moment}
	are satisfied for both pairs $X,Y$ and $X',Y'$. Let $\inrkhs$ be 
	characteristic and $\ko, \ko': \inrkhs \rightarrow L^2(\invmeas)$
	be conditional expectation operators given by
	$\ko f = \Ex[ f(Y) \mid X = \cdot]$ and
	$\ko' f = \Ex[ f(Y') \mid X' = \cdot]$ defined by some Markov kernels
	$\tk$ and $\tk'$ respectively.
	Then we have
	$\norm{ \ko - \ko' }_{ S_2(\inrkhs, L^2(\invmeas))} = 0 $
	if and only if $\law(X,Y) = \law(X',Y')$.
\end{corollary}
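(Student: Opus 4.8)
The plan is to reduce the statement to the equivalence already established in Corollary~\ref{cor:markov_kernel_equivalence}. Since $X \sim \invmeas$ and $X' \sim \invmeas$ share the same marginal and $\inrkhs$ is characteristic, Corollary~\ref{cor:markov_kernel_equivalence} gives that $\norm{\ko - \ko'}_{\inrkhs \rightarrow L^2(\invmeas)} = 0$ holds precisely when the associated Markov kernels satisfy $\tk(x,\cdot) = \tk'(x,\cdot)$ for $\invmeas$-a.e.\ $x \in E$. It therefore suffices to prove the purely measure-theoretic equivalence that $\tk(x,\cdot) = \tk'(x,\cdot)$ for $\invmeas$-a.e.\ $x$ if and only if $\law(X,Y) = \law(X',Y')$. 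Both directions rest on the disintegration formula recorded in Section~\ref{sec:prelims}, which expresses each joint law through the common marginal $\invmeas$ and the respective Markov kernel.

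For the forward direction, I would start from the a.e.\ equality of the kernels and insert it into the disintegration identity to obtain, for all $\mathcal{A}, \mathcal{B} \in \sigalg_E$,
\[
	\Pr[X \in \mathcal{A}, Y \in \mathcal{B}] = \int_{\mathcal{A}} \tk(x, \mathcal{B}) \, \dd \invmeas(x) = \int_{\mathcal{A}} \tk'(x, \mathcal{B}) \, \dd \invmeas(x) = \Pr[X' \in \mathcal{A}, Y' \in \mathcal{B}],
\]
where the middle equality merely alters the integrand on an $\invmeas$-null set and hence leaves the integral unchanged. The two joint laws thus coincide on all measurable rectangles. As these rectangles form a $\pi$-system generating $\mathcal{B}(E \times E) = \sigalg_E^{\otimes 2}$ (the compatibility of the Borel and product fields on the Polish space $E$ was noted in Section~\ref{sec:prelims}), the uniqueness theorem for probability measures on a generating $\pi$-system yields $\law(X,Y) = \law(X',Y')$.

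For the reverse direction, I would observe that if $\law(X,Y) = \law(X',Y')$, then $\tk$ and $\tk'$ both describe regular versions of one and the same conditional distribution of the second coordinate given the first. The uniqueness of regular conditional distributions recorded in Section~\ref{sec:prelims} then gives at once that $\tk(x,\cdot) = \tk'(x,\cdot)$ as measures for $\invmeas$-a.e.\ $x$, closing the equivalence. I expect this to be where the only genuine subtlety lies: the conclusion asserts equality of the full fiber measures off a single $\invmeas$-null set rather than mere setwise agreement for each fixed $\mathcal{B}$. This is precisely what the Polish, hence countably generated, structure of $(E, \sigalg_E)$ secures, by first matching the kernels on a countable generating $\pi$-system off a common null set and then extending fiberwise through the uniqueness theorem.
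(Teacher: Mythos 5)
Your proposal is correct and follows essentially the same route as the paper: both reduce the operator-norm condition to $\invmeas$-a.e.\ equality of the Markov kernels via Corollary~\ref{cor:markov_kernel_equivalence}, use the disintegration formula to pass between kernel equality and equality of the joint laws, and invoke the a.e.\ uniqueness of regular conditional distributions for the reverse direction. You merely spell out details the paper leaves implicit (the $\pi$-system uniqueness argument on rectangles and the countably-generated-field step behind uniqueness of regular versions), which is fine.
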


\begin{proof} Let 
	$\norm{ \ko - \ko' }_{\inrkhs \rightarrow L^2(\invmeas)} = 0 $.
	For any two events $\mathcal{A}, \mathcal{B} \in \sigalg_{E}$,
	we perform the disintegration
	\begin{equation}
		\Pr[X \in \mathcal{A}, Y \in \mathcal{B}]
		= \int_\mathcal{A} p(x, \mathcal{B}) \, \dd \invmeas(x)
	\end{equation}
	and analogously for the pair $X',Y'$. We apply 
	Corollary~\ref{cor:markov_kernel_equivalence},
	leading to the $\invmeas$-a.e.\ equivalence
	$\tk(\cdot, \mathcal{B}) = \tk'(\cdot, \mathcal{B})$.
	This gives
	$\Pr[X \in \mathcal{A}, Y \in \mathcal{B}]
	= \Pr[X' \in \mathcal{A}, Y' \in \mathcal{B}]$.
	The converse implication follows analogously.
\end{proof}

\subsection{Least squares regression and connection to the CME}
\label{sec:least_squares_regression}

We now describe the theoretical foundation of estimating 
$\ko$ based on Theorem~\ref{thm:regression_viewpoint}.
In the process, we will see that our concept is closely related to the CME.

By the operator reproducing
property from Corollary~\ref{cor:vecrkhs_isomorphism} 
we may rewrite the
vRKHS least squares regression problem
\begin{equation}
	\label{eq:function_regression_problem}
	\argmin_{F \in \vecrkhs} \risk(F)
	\textnormal{ with } 
	\risk(F) := \Ex[ \norm{\varphi(Y) - F(X) }_\inrkhs^2 ]
\end{equation}
equivalently as
\begin{equation}
	\label{eq:operator_regression_problem}
	\argmin_{A^* \in \HS{\inrkhs}} \Ex[ \norm{\varphi(Y) - A^*\varphi(X) }_\inrkhs^2 ].
\end{equation}
As is well-known in statistical learning 
theory~\citep[Proposition 1]{Cucker02:Theory},
for all $F \in L^2(E, \sigalg, \invmeas; \inrkhs)$,
the risk $\risk$ allows for the decomposition
 \begin{equation}
 	\label{eq:risk_decomposition}
 	\risk(F) = \norm{\Fstar - F}^2_{L^2(E, \sigalg, \invmeas; \inrkhs)} 
 	+ \risk(\Fstar),
 \end{equation}
where $\risk(\Fstar)$ represents the irreducible error term
(see Theorem~\ref{thm:risk_reformulation}
for a proof in the infinite-dimensional case). 
This reduces the regression 
problem~\eqref{eq:function_regression_problem} and equivalently 
problem~\eqref{eq:operator_regression_problem}
to an $L^2$-approximation of the conditional mean embedding $\Fstar$.
In this context, $\Fstar$ is often 
called \emph{regression function}.
Therefore, the so-called \emph{excess risk} 
$\risk(F) - \risk(\Fstar) = 
\norm{\Fstar - F}^2_{L^2(E, \sigalg, \invmeas; \inrkhs)} $
of some estimate $F \in \vecrkhs$ is typically investigated in
nonparametric statistics.

The above formalism allows us to estimate 
the conditional mean operator $\ko$ based on our previous
results.
By Theorem~\ref{thm:regression_viewpoint},
we have
\begin{equation}
	\norm{A - \ko}^2_{\inrkhs \rightarrow L^2(\invmeas)} \leq
	\norm{ \Fstar - A^*\varphi(\cdot)  }_{ L^2(E, \sigalg_{E}, \invmeas; \inrkhs) }^2
\end{equation}
for all $A^* \in \HS{\inrkhs}$.
We can now perform 
the vRKHS regression~\eqref{eq:operator_regression_problem}
and obtain an approximation of $\ko$ 
in the norm $	\norm{\cdot}^2_{\inrkhs \rightarrow L^2(\invmeas)} $ in terms
of $A \in \HS{\inrkhs}$, which we implicitly interpret
as an operator from $\inrkhs $ to $L^2(\invmeas)$.
Theorem~\ref{thm:nonparametric_approximation}
and Corollary~\ref{cor:nonparametric_approximation} show
that this is possible up to an arbitrary degree of accuracy.

Along the lines of the known work on least squares regression of
the form~\eqref{eq:function_regression_problem} or
equivalently~\eqref{eq:operator_regression_problem},
we can distinguish following two general cases:
\begin{enumerate}
	\item The \emph{well-specified case}, i.e., there exists
	a regular version of the conditional distribution of
	$Y$ given $X$ such that 
	$\Fstar(\cdot) = \Ex[ \varphi(Y) \mid X = \cdot ] \in \vecrkhs$.
	For the well-specified case, we below obtain the known properties
	of the conditional mean embedding which were derived
	from the linear-algebraic 
	perspective~\citep{SHSF09,Klebanov2019rigorous,Klebanov2020linear}.
	\item The \emph{misspecified case}, i.e., 
	$\Fstar \in L^2(\invmeas) \setminus \vecrkhs$. This is clearly
	the more interesting setting, as the well-specified case is
	typically not ensured in practice. From
	the operator-theoretic perspective, 
	this case has not been investigated yet.
\end{enumerate}

Our previous results allows reformulation of the well-specified case
and establishes a connection to the CME.

\begin{corollary}[Well-specified case]
	\label{cor:well-specified-case}
	Let Assumption~\ref{ass:separability}--\ref{ass:second_moment}
	be satisfied. 
	Consider a fixed regular version of the distribution of $Y$ conditioned on $X$
	given by some Markov kernel $\tk: E \times \sigalg_{E} \rightarrow \R$.
	The following statements are equivalent:
	\begin{enumerate}[label=(\roman*)]
	
	\item We have
		$\Fstar(\cdot) = \Ex[ \varphi(Y) \mid X = \cdot ] \in \vecrkhs$.

	\item There exists an operator $A \in \HS{\inrkhs}$
	such that
	\begin{equation}
		\label{eq:well-specified}
		[Af](x) = \innerprod{Af}{\varphi(x)}_\inrkhs = 
		\innerprod{f}{A^*\varphi(x)}_\inrkhs = \Ex[f(Y) \mid X = x]
	\end{equation}
	for all $x \in E$ and $f \in \inrkhs$.
	\end{enumerate}
	
	Both (i) and (ii) imply (iii):
	
	\begin{enumerate}[resume, label=(\roman*)]
	
	\item There exists an operator $A \in \HS{\inrkhs}$ which 
	satisfies
	$\norm{A-\ko}_{\inrkhs \rightarrow L^2(\invmeas)} = 0$.
	
	\end{enumerate}

\end{corollary}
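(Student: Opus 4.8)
The plan is to derive the whole chain by leaning on two structural facts already at our disposal: the operator reproducing property of Corollary~\ref{cor:vecrkhs_isomorphism}, which furnishes a bijection between $\HS{\inrkhs}$ and $\vecrkhs$ through $A^* \leftrightarrow A^*\varphi(\cdot)$, and the defining pointwise identity~\eqref{eq:CME_property}, namely $\Ex[f(Y) \mid X = x] = \innerprod{f}{\Fstar(x)}_\inrkhs$ for all $f \in \inrkhs$ and $x \in E$. The implication to (iii) will then be an immediate consequence of the sharp surrogate bound in Theorem~\ref{thm:regression_viewpoint}. I would first settle the equivalence (i)$\Leftrightarrow$(ii), and only afterwards read off (iii).

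For (i)$\Rightarrow$(ii), assume $\Fstar \in \vecrkhs$. By Corollary~\ref{cor:vecrkhs_isomorphism} there is a unique $B := \Theta^{-1}(\Fstar) \in \HS{\inrkhs}$ with $\Fstar(x) = B\varphi(x)$ for every $x \in E$; I would set $A := B^*$, which again lies in $\HS{\inrkhs}$ since the Hilbert--Schmidt class is closed under taking adjoints and $A^* = B^{**} = B$. Then $A^*\varphi(x) = \Fstar(x)$, and for each $f \in \inrkhs$ the reproducing property of $\inrkhs$ gives $[Af](x) = \innerprod{Af}{\varphi(x)}_\inrkhs = \innerprod{f}{A^*\varphi(x)}_\inrkhs = \innerprod{f}{\Fstar(x)}_\inrkhs$, which equals $\Ex[f(Y) \mid X = x]$ by~\eqref{eq:CME_property}; this is exactly the chain of equalities asserted in~\eqref{eq:well-specified}. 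Conversely, for (ii)$\Rightarrow$(i), an operator $A \in \HS{\inrkhs}$ satisfying~\eqref{eq:well-specified} combined with~\eqref{eq:CME_property} yields $\innerprod{f}{A^*\varphi(x) - \Fstar(x)}_\inrkhs = 0$ for all $f \in \inrkhs$ and all $x \in E$; since this holds for every $f$, I obtain $A^*\varphi(x) = \Fstar(x)$ pointwise. Invoking Corollary~\ref{cor:vecrkhs_isomorphism} once more, the function $A^*\varphi(\cdot)$ belongs to $\vecrkhs$, and as it agrees with $\Fstar$ everywhere I conclude $\Fstar \in \vecrkhs$, which is (i).

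Finally, assuming (i) or (ii), the operator $A$ constructed above satisfies $A^*\varphi(\cdot) = \Fstar$ as elements of $L^2(E, \sigalg_{E}, \invmeas; \inrkhs)$, so the right-hand side of the bound in Theorem~\ref{thm:regression_viewpoint} vanishes and hence $\norm{A - \ko}_{\inrkhs \rightarrow L^2(\invmeas)} = 0$, giving (iii). The only point requiring genuine care, and the step I expect to be the main obstacle, is the pointwise-versus-almost-everywhere bookkeeping: because Corollary~\ref{cor:vecrkhs_isomorphism} is a pointwise statement whereas $\Fstar$ a priori lives in an $L^2$-space of equivalence classes, I must verify that $A^*\varphi(x) = \Fstar(x)$ holds for \emph{every} $x \in E$ relative to the fixed regular version $\tk$, rather than merely $\invmeas$-almost everywhere. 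This is secured by observing that both $\Fstar(\cdot)$ (defined through $\tk$) and~\eqref{eq:CME_property} are genuinely pointwise objects, so the defining equalities may be evaluated at each individual $x$.
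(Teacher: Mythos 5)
Your proposal is correct and follows essentially the same route as the paper's proof: the equivalence (i)$\Leftrightarrow$(ii) via the operator reproducing property of Corollary~\ref{cor:vecrkhs_isomorphism} together with the reproducing/CME property, and (iii) by plugging the resulting operator into the sharp bound of Theorem~\ref{thm:regression_viewpoint}. The only cosmetic differences are that you cite \eqref{eq:CME_property} directly where the paper re-derives the interchange of the inner product with the conditional Bochner expectation, and that you make the adjoint bookkeeping ($A := B^*$) and the pointwise-versus-a.e.\ issue explicit — both consistent with the paper's argument.
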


\begin{proof} We show that (i) is equal to (ii).
	Let $\Fstar(\cdot) = \Ex[ \varphi(Y) \mid X = \cdot ] \in \vecrkhs$.
	Let $A^* \in \HS{\inrkhs}$ be the unique operator such that
	$A^*\varphi(\cdot) = \Fstar(\cdot)$ by
	Corollary~\ref{cor:vecrkhs_isomorphism}. 
	By the reproducing property in $\inrkhs$,
	we can verify \eqref{eq:well-specified} immediately.
	For the converse implication, let \eqref{eq:well-specified}
	be satisfied
	for some operator $A^* \in \HS{\inrkhs}$.
	Then by Corollary~\ref{cor:vecrkhs_isomorphism},
	we have the function $F \in \vecrkhs$ with
	$F(\cdot) = A^* \varphi(\cdot)$
	such that 
	\begin{equation}
		\label{eq:well_specified_case_proof}
		\innerprod{f}{F(x)}_\inrkhs = \Ex[f(Y) \mid X = x]
		= \Ex[\innerprod{f}{\varphi(Y)}_\inrkhs \mid X = x]
	\end{equation}
	for all $f \in \inrkhs$.
	The right hand side of~\ref{eq:well_specified_case_proof}
	is equal to $\innerprod{f}{\Ex[{\varphi(Y)}_\inrkhs \mid X = x]}_\inrkhs$
	for all $x \in E$ and $f \in \inrkhs$,
	we therefore have 
	$F(\cdot) 
	= \Ex[ \varphi(Y) \mid X = \cdot ] = \Fstar(\cdot) \in \vecrkhs$ 
	as claimed.
	The last statement follows from
	Theorem~\ref{thm:regression_viewpoint} by inserting
	$A^*$ into the right hand side of the bound,
	giving $\norm{A- \ko}_{\inrkhs \rightarrow L^2(\invmeas)} = 0$.
\end{proof}

\begin{remark}[Connection to CME and well-specified case]
	By comparing~\eqref{eq:well-specified} to the 
	expectation reproducing property~\eqref{eq:CME_property},
	we see that in the well-specified case,
	the operator $A^*$ satisfying~\eqref{eq:well-specified}
	is exactly the operator which was introduced
	by \citet{SHSF09} as the original conditional mean embedding.
	\emph{That is, we obtain the approximation of
	$\ko$ from $\inrkhs$ to
	$L^2(\invmeas)$ as the adjoint of the CME}.
	A similar connection was established by~\citet{Klus2019}
	under the restrictive assumptions of~\cite{SHSF09} in the context of
	Markov operators.	
\end{remark}

\begin{remark}[Well-specified case closed form solution]
	\citet[Theorem 5.8]{Klebanov2020linear} 
	prove in a slightly different context of tensor product spaces
	without explicitly using vRKHSs, that in the well-specified case
	the operator
	$A^*$ satisfying~\eqref{eq:well-specified} 
	can be expressed in terms of the covariance
	operators as $A^* = (\cov[XX]^\dagger \cov[XY])^*$.
	In fact, this proves that $(\cov[XX]^\dagger \cov[XY])^*$
	is Hilbert--Schmidt in this case.
\end{remark}

\begin{remark}[Surrogate risk bound for the CME]
	\label{rem:surrogate_risk}
	In the well-specified case, \citet{Park2020MeasureTheoretic}
	investigate the estimation of the 
	CME in terms
	of~\eqref{eq:function_regression_problem}.
	Their results build upon the surrogate risk
	bound 
	\begin{equation*}
		\norm{A - \ko}^2_{\inrkhs \rightarrow L^2(\invmeas)}
		\leq \risk(A^*\varphi(\cdot)),
	\end{equation*} originally formulated by \citet{Gruen12}.
	Our Theorem~\ref{thm:regression_viewpoint} improves this
	bound and eliminates the need for additional approximation results
	~\citep[Theorem 3.2]{Gruen12} 
	for the analysis of the misspecified case.
	By \eqref{eq:risk_decomposition}, our bound 
	from Theorem~\ref{thm:regression_viewpoint} equals to
	\begin{equation*}
		\norm{A - \ko}^2_{\inrkhs \rightarrow L^2(\invmeas)}
		\leq \risk(A^*\varphi(\cdot)) - \risk({\Fstar}),
	\end{equation*} 
	 which allows the approximation up to an arbitrary accuracy and
	removes the excess term $\risk(\Fstar)$. 
\end{remark}

We have seen that in the well-specified case, our results align
with prior work on the CME.
In the practically more relevant 
misspecified case however, the bound given by 
Theorem~\ref{thm:regression_viewpoint} significantly simplifies
the theory of approximating the CME.
For the remainder of the paper, we will focus on
the empirical estimation of $\ko$
without restricting ourselves to the well-specified case.

\section{Empirical estimation and regularization theory}
\label{sec:empirical_estimation}

We now connect our previous results to the
theory of supervised learning and derive
empirical estimators of $\ko$.
To this end, we will briefly review
how the regression problem \eqref{eq:function_regression_problem} 
can be formulated
in terms of an inverse problem.
The decomposition of $\risk$ in
\eqref{eq:risk_decomposition} allows to obtain a solution
by approximating $\Fstar$ with functions
in $\vecrkhs$. This framework allows to 
derive the well-known formalism for supervised learning
and regularization theory
which will yield estimates of $\ko$.
We refer
to the seminal work for least squares regression
with vRKHSs \citep{Caponnetto2007} for
more details. This section contains the reformulation of our
setting in terms of known results,
making the theory of vRKHS regression applicable
for the estimation of $\ko$. 
We use this framework to
derive new results in Section~\ref{sec:tikhonov_phillips_regularization}.

\subsection{Inverse problem}
\label{sec:inverse_problem}

In the misspecified case,
it is not necessarily clear that the minimizer 
of $\risk$ over $\vecrkhs$ exists.
The analytical nature of this question
can be naturally expressed in terms of an 
inverse problem. 
For the necessary background on inverse problems in Hilbert spaces and regularization theory,
we refer to~\citet{EHN96}.
We will formulate~\eqref{eq:function_regression_problem}
a bit more verbosely in terms of the inclusion
$\vecrkhsi_\invmeas: \inrkhs \rightarrow L^2(E, \sigalg_E, \invmeas; \inrkhs)$,
so that the connection to the inverse problem becomes clear.

If $F \in \vecrkhs$, we have by~\eqref{eq:risk_decomposition} that
\begin{equation*}
	\risk(F) = \norm{ \vecrkhsi_\invmeas F - \Fstar }^2_{L^2(E, \sigalg_E, \invmeas; \inrkhs)} + \risk(\Fstar).
\end{equation*}
Finding $\FG := \argmin_{F \in \vecrkhs} \risk(F)$
is therefore equivalent to finding $\FG \in \vecrkhs$ such that
\begin{equation*}
	\norm{ \vecrkhsi_\invmeas F - \Fstar }^2_{L^2(E, \sigalg_E, \invmeas; \inrkhs)}
\end{equation*}
is minimal.
As is well-known from the theory of inverse problems,
this is equivalent to finding
the optimal solution $\FG$
of the potentially ill-posed inverse problem
\begin{equation}
	\label{eq:inverse_problem}
	\vecrkhsi_\invmeas F = \Fstar, \quad F \in \vecrkhs.
\end{equation}
The inverse problem \eqref{eq:inverse_problem}
is again equivalent to finding the
solution of the so-called \emph{normal equation}
\citep[Theorem 2.6]{EHN96} given by
\begin{equation*}
	(\vecrkhsi_\invmeas^* \vecrkhsi_\invmeas ) 
	F = \vecrkhscov F = \vecrkhsi_\invmeas^* \Fstar, \quad F \in \vecrkhs.
\end{equation*}
In particular, we obtain the following solution.

\begin{theorem}[Regression solution]
	\label{thm:surrogate_inverse}
	Let Assumptions~\ref{ass:separability}--\ref{ass:second_moment}
	be satisfied.
	The optimal solution
	\begin{equation*}
		\FG = \argmin_{F \in \vecrkhs} R(F) = \argmin_{F \in \vecrkhs}
		\norm{ \vecrkhsi_\invmeas F - \Fstar }^2_{L^2(E, \sigalg_E, \invmeas; \inrkhs)}
	\end{equation*}
	exists if and only if 
	$\vecrkhsi_\invmeas^* \Fstar \in \range(\vecrkhscov) + \range(\vecrkhscov)^\perp =: \domain(T^\dagger)$,\!\footnote{An
		equivalent condition is
		$ \proj \Fstar \in \range(\vecrkhsi_\invmeas)$,
		where $\proj \colon L^2(E, \sigalg_E, \invmeas; \inrkhs) 
		\rightarrow L^2(E, \sigalg_E, \invmeas; \inrkhs)$ is the 
		orthogonal projection onto the closure of 
		$\range(\vecrkhsi_\invmeas)$.
	}  	
	where the operator $\vecrkhscov^\dagger : \range(\vecrkhscov) + \range(\vecrkhscov)^\perp \rightarrow \vecrkhs$
	is the pseudoinverse
	of $\vecrkhscov$.
	In this case,
	$ \FG$ is given by the solution to the
	normal equation
	\begin{equation}
		\label{eq:normal_equation}
		\vecrkhscov F = \vecrkhsi_\invmeas^* \Fstar, \quad F \in \vecrkhs
	\end{equation}
	in terms of $\FG = \vecrkhscov^\dagger \vecrkhsi_\invmeas^* \Fstar$.
\end{theorem}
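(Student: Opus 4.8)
The plan is to recognize this statement as the standard least--squares/pseudoinverse theory for the bounded inclusion operator $\vecrkhsi_\invmeas \colon \vecrkhs \rightarrow L^2(E, \sigalg_{E}, \invmeas; \inrkhs)$, and then to transport its conclusions through the defining identity $\vecrkhscov = \vecrkhsi_\invmeas^* \vecrkhsi_\invmeas$. First I would invoke the risk decomposition~\eqref{eq:risk_decomposition}, which shows that minimizing $\risk$ over $\vecrkhs$ coincides with minimizing the residual $\norm{\vecrkhsi_\invmeas F - \Fstar}_{L^2(E, \sigalg_{E}, \invmeas; \inrkhs)}^2$. Hence $\FG$ is precisely the \emph{best-approximate solution} (the minimal-norm least-squares solution) of the possibly ill-posed operator equation $\vecrkhsi_\invmeas F = \Fstar$, and the claimed non-uniqueness-free statement is to be read in that sense, since $\vecrkhsi_\invmeas$ need not be injective under Assumptions~\ref{ass:separability}--\ref{ass:second_moment} alone.

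Next I would apply the general characterization of least-squares solutions for bounded Hilbert space operators~\citep[Theorem 2.6]{EHN96} with $T = \vecrkhsi_\invmeas$ and right-hand side $\Fstar$. This directly yields three things: a minimizer exists if and only if $\proj \Fstar \in \range(\vecrkhsi_\invmeas)$ (equivalently $\Fstar \in \domain(\vecrkhsi_\invmeas^\dagger) = \range(\vecrkhsi_\invmeas) + \range(\vecrkhsi_\invmeas)^\perp$), the set of minimizers equals the solution set of the normal equation $\vecrkhsi_\invmeas^* \vecrkhsi_\invmeas F = \vecrkhsi_\invmeas^* \Fstar$, and the best-approximate solution is $\vecrkhsi_\invmeas^\dagger \Fstar$. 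Since $\vecrkhsi_\invmeas^* \vecrkhsi_\invmeas = \vecrkhscov$ by definition of the generalized covariance operator, the normal equation is exactly~\eqref{eq:normal_equation}, and the standard operator identity $\vecrkhsi_\invmeas^\dagger = \vecrkhscov^\dagger \vecrkhsi_\invmeas^*$ (valid on $\domain(\vecrkhsi_\invmeas^\dagger)$) delivers the asserted formula $\FG = \vecrkhscov^\dagger \vecrkhsi_\invmeas^* \Fstar$.

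The step I expect to be the main obstacle is reconciling the two existence conditions: the one produced by the theory above lives on the $L^2$-side, $\proj \Fstar \in \range(\vecrkhsi_\invmeas)$, whereas the theorem states the condition $\vecrkhsi_\invmeas^* \Fstar \in \domain(\vecrkhscov^\dagger) = \range(\vecrkhscov) + \range(\vecrkhscov)^\perp$ on $\vecrkhs$. The structural fact I would establish first is $\ker \vecrkhscov = \ker \vecrkhsi_\invmeas$: if $\vecrkhscov F = 0$ then $\innerprod{\vecrkhscov F}{F}_\vecrkhs = \norm{\vecrkhsi_\invmeas F}^2_{L^2(E, \sigalg_{E}, \invmeas; \inrkhs)} = 0$, so $\vecrkhsi_\invmeas F = 0$, and the reverse inclusion is immediate. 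Because $\vecrkhscov$ is self-adjoint this gives $\range(\vecrkhscov)^\perp = \ker \vecrkhscov = \ker \vecrkhsi_\invmeas$, whence $\range(\vecrkhsi_\invmeas^*) \subseteq \overline{\range(\vecrkhsi_\invmeas^*)} = \ker(\vecrkhsi_\invmeas)^\perp = \overline{\range(\vecrkhscov)}$. Consequently $\vecrkhsi_\invmeas^* \Fstar$ has no component in $\range(\vecrkhscov)^\perp$, so membership in $\domain(\vecrkhscov^\dagger)$ forces $\vecrkhsi_\invmeas^* \Fstar \in \range(\vecrkhscov)$, i.e.\ solvability of~\eqref{eq:normal_equation}; conversely, existence of $\FG$ gives $\vecrkhsi_\invmeas^* \Fstar = \vecrkhscov \FG \in \range(\vecrkhscov) \subseteq \domain(\vecrkhscov^\dagger)$. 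This closes the equivalence.

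Throughout I would stress that no compactness of $\vecrkhscov$ is available, as Example~\ref{ex:noncompact} shows $\vecrkhscov$ is generally noncompact, so a spectral-series shortcut is unavailable and I must rely on the closed-range-free pseudoinverse theory and track the (dense but generally non-closed) domains of $\vecrkhsi_\invmeas^\dagger$ and $\vecrkhscov^\dagger$ carefully. The only genuinely analytic ingredient is the projection-based solvability criterion supplied by~\citep[Theorem 2.6]{EHN96}; everything else is the bookkeeping of ranges and kernels described above.
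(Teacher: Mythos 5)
Your proposal is correct and follows essentially the same route as the paper: the paper likewise reduces $\risk$-minimization to the least-squares problem for the inclusion $\vecrkhsi_\invmeas$ via the decomposition~\eqref{eq:risk_decomposition} and then invokes the standard pseudoinverse/normal-equation theory of \citet[Theorem 2.6]{EHN96}, stating the theorem as an immediate consequence. Your additional bookkeeping --- the identity $\ker \vecrkhscov = \ker \vecrkhsi_\invmeas$, the resulting inclusion $\range(\vecrkhsi_\invmeas^*) \subseteq \overline{\range(\vecrkhscov)}$, and the identity $\vecrkhsi_\invmeas^\dagger = \vecrkhscov^\dagger \vecrkhsi_\invmeas^*$ --- correctly fills in the equivalence between the $L^2$-side condition $\proj \Fstar \in \range(\vecrkhsi_\invmeas)$ and the stated condition $\vecrkhsi_\invmeas^* \Fstar \in \domain(\vecrkhscov^\dagger)$, which the paper only asserts in a footnote.
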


\subsection{Regularization and empirical estimation}

For simplicity, we assume
that the optimal solution $\FG = \argmin_{\vecrkhs} \risk(F)$
exists, i.e., we have $ \vecrkhsi_\invmeas^* \Fstar \in \domain(\vecrkhscov^\dagger)$.
We wish to compute
a solution of the normal equation
\begin{equation}
	\label{eq:normal_equation1}
	\vecrkhscov F = \vecrkhsi_\invmeas^* \Fstar, \quad F \in \vecrkhs.
\end{equation}
in terms of $\FG = \vecrkhscov^\dagger \vecrkhsi_\invmeas^* \Fstar$
based on an empirical realization of $(X_t)_{t \in \Z}$.

In order to do this, we must discretize $\vecrkhscov$ 
as well as the right-hand side $ \vecrkhsi_\invmeas^* \Fstar$.
We now face the problem that
\eqref{eq:normal_equation1} 
may be \emph{ill-posed} in the sense that
the solution does not continously depend on $\vecrkhsi_\invmeas^* \Fstar$ 
(and of course on $\vecrkhscov$ as well).
To still be able to perform an estimation, 
a \emph{regularization strategy}~\citep{EHN96}
is needed to ensure well-posedness in practice.

Let 
$\{ g_\reg( \vecrkhscov): \vecrkhs \rightarrow \vecrkhs \, | \, \reg \in (0, \infty ] \} $
be a regularization strategy.\!\footnote{
	We require 
	$\{ g_\reg( \vecrkhscov): \vecrkhs \rightarrow \vecrkhs \, | \, \reg \in (0, \infty ] \} $
	to be a parametrized family of \emph{globally defined bounded operators}
	satisfying $ g_\reg(\vecrkhscov) F \to \vecrkhscov^\dagger F$
	for all $F \in \domain(\vecrkhscov^\dagger$)
	as $\reg \to 0$.
}
For a fixed regularization parameter $ \reg >0$, we define
the regularized solution
\begin{equation}
	\label{eq:regF}
	\regF := g_\reg(\vecrkhscov) \vecrkhsi_\invmeas^* \Fstar \in \vecrkhs.
\end{equation}

We now discretize the regularized problem~\eqref{eq:regF} 
based on the empirical data
\[
	\bz := ((X_1, Y_1), \dots, (X_n, Y_n)),
\]
where we assume iid $(X_i , Y_i ) \sim \law(X,Y)$.
We generalize the \emph{sampling operator approach}
\citep{Smale2005} from the scalar setting to
the vector-valued scenario and derive an empirical
estimate of $\regF$.
Given the data above, we define the \emph{sampling operator}
$ \samplingop \colon \vecrkhs \rightarrow \inrkhs^n$
given by
$
	\samplingop F := ( F(X_t) )_{t = 1}^n 
	= ( K_{X_t}^* F )_{t = 1}^n.
$
Here, we consider $\inrkhs^n$ as a Hilbert space
equipped with the inner product
\begin{equation*}
	\innerprod{\mathbf{f}}{\mathbf{h}}_{\inrkhs^n} := \frac{1}{n}
	\sum_{i=1}^n \innerprod{ f_i } {h_i}_\inrkhs
\end{equation*}
for $\mathbf{f} = (f_1, \dots, f_n) \in \inrkhs^n$
and $\mathbf{h} = (h_1, \dots, h_n) \in \inrkhs^n$.
It is easy to see that the adjoint of $\samplingop$ is
the operator $\samplingop^* \colon \inrkhs^n \rightarrow \vecrkhs $
given by 
\begin{equation*}
	\samplingop^* \mathbf{h} = 
	\frac{1}{n} \sum_{i = 1}^n K_{X_i} h_i
\end{equation*}
for all $\mathbf{h} \in \inrkhs^n$
and the operator 
$\empvecrkhscov := 
\samplingop^* \samplingop \colon \vecrkhs \rightarrow \vecrkhs$
satisfies
\begin{equation*}
	\empvecrkhscov F = \samplingop^* \samplingop F = \frac{1}{n} 
	\sum_{i = 1}^n K_{X_i} K_{X_i}^* F
\end{equation*}
for all $F \in \vecrkhs$.
Based on these considerations, we will use
$\samplingop^*$ and $\empvecrkhscov$
as empirical estimates for 
$\vecrkhsi_\invmeas^*$ and $\vecrkhscov$ respectively based on 
the data $\bx$.
We define the \emph{target data vector}
	$\Upsilon := ( \varphi( Y_1 ), \dots, \varphi(Y_n) ) \in \inrkhs^n$
and obtain the empirical regularized solution
\begin{equation}
	\label{eq:empreg}
	\empregF := g_\reg(\empvecrkhscov) \samplingop^* \Upsilon \in \vecrkhs
\end{equation}
as the discretized analogue of the 
analytical regularized solution \eqref{eq:regF}.

Via the identification
of $\regF$ and $\empregF$ with
operators through the isomorphism $\Theta$ in 
Corollary~\ref{cor:vecrkhs_isomorphism},
we obtain the
\emph{analytical regularized operator solution}
\begin{equation*}
	\regA := [\Theta^{-1}(\regF)]^* \in \HS{\inrkhs}
\end{equation*}
as well as the \emph{empirical regularized operator solution}
\begin{equation*}
	\empregA := [\Theta^{-1}(\empregF)]^* \in \HS{\inrkhs}, 
\end{equation*} i.e.,
$\regF(x) = \regA\varphi(x)$ and $\empregF(x) = \empregA\varphi(x)$
for all $x \in E$.

\begin{remark}[Convergence rates $\empregF \to \Fstar$]
    \citet{MollenhauerEtAl2022} provide a convergence analysis
    for regularized least squares regression with infinite-dimensional 
    outputs which cover the estimate $\empregF$ constructed in
    this section.
    Under classical smoothness assumptions for
    $\Fstar$ in the well-specified case, probabilistic rates up to $1/\sqrt{n}$
    are obtained for generic regularisation schemes. 
    These rates match known lower bounds
    on rates for classical kernel regression with scalar response under
    analogous assumptions.
    Moreover, \citet{LiEtAl2022} prove optimal rates for the misspecified case
    in terms of norms in interpolation spaces between
    $\vecrkhs$ and $L^2(E, \sigalg:E, \invmeas; \inrkhs)$ for the 
    special Tikhonov--Phillips regularization case 
    (see Section~\ref{sec:tikhonov_phillips_regularization}).
    These results cover fast rates up to $1/n$ for more sophisticated assumptions
    about the underlying joint distribution of $X$ and $Y$.
\end{remark}

\section{Tikhonov-Phillips regularization}
\label{sec:tikhonov_phillips_regularization}
For the remainder of this paper, we will restrict ourselves
to the Tikhonov--Phillips regularization approach \citep{TA77}
to solve the (potentially ill-posed) inverse problem
given by Theorem~\ref{thm:surrogate_inverse}
in order to obtain the optimal solution $\FG$ in $\vecrkhs$
of the surrogate problem (assuming it exists).

\subsection{General framework}
Tikhonov--Phillips regularization corresponds
to the regularization strategy
$ g_\reg(\vecrkhscov) := (\vecrkhscov + \reg \id_\vecrkhs)^{-1} \in \bounded{\vecrkhs}$ for 
$\reg > 0$.
We replace the risk $\risk$ with the
\emph{regularized risk} 
\begin{equation}
	\label{eq:regrisk}
	\regrisk(F) := \risk(F) + \reg \norm{F}_\vecrkhs^2
\end{equation}
with a regularization parameter $\reg > 0 $.
The unique minimizer of~\eqref{eq:regrisk}
exists for all $\reg > 0$ and is exactly given by the regularized solution
$
	\regF = (\vecrkhscov + \reg \id_\vecrkhs)^{-1} \vecrkhsi_\invmeas^*\Fstar
$,
which is a standard result
in inverse problems~\citep[Theorem 5.1]{EHN96}.
Based on the data
$\bz$, we define the \emph{regularized empirical risk}
\begin{equation}
	\label{eq:empregrisk}
	\empregrisk(F) := 
	\frac{1}{n}\sum_{i = 1}^n \norm{ \varphi(Y_i) - F(X_i) }_\inrkhs^2
	+ \reg \norm{F}^2_\vecrkhs
\end{equation}
for all $F \in \vecrkhs$.
We can reformulate \eqref{eq:empregrisk} in terms
of the sampling operator equivalently as
$
	\empregrisk(F) = \norm{ \samplingop F - \Upsilon }^2_{\inrkhs^n} 
	+ \reg \norm{ F }^2_\vecrkhs
$
for all $F \in \vecrkhs$.
Therefore, $\empregrisk$ admits a unique minimizer
in $\vecrkhs$ given by the regularized empirical solution
$
	\empregF = (\empvecrkhscov + \reg \id_\vecrkhs)^{-1} \samplingop^* \Upsilon,
$
which we will consider from now on as the estimate
of~$\regF$.

\subsection{Closed form Tikhonov--Phillips operator estimates}
\label{sec:recovering_operator}

We show that for the Tikhonov--Phillips estimate,
the adjoint of the regularized analytical operator solution
$\regA^* = \Theta^{-1}(\regF) $ which satisfies
\begin{equation*}
	 \regA^*  = \argmin_{A \in \HS{\inrkhs}  } 
	\Ex[ \norm{   \varphi(Y) - A^*\varphi(X) }^2_\inrkhs  ] + \reg \norm{A}^2_{\HS{\inrkhs}}
\end{equation*}
admits a closed form
representation in terms of covariance operators associated with the kernel $k$.
In fact, we prove that $\regA^*$ has the known form
which \citet{SHSF09} originally identified as the conditional mean embedding
under the previously mentioned restrictive assumptions.


While this result does not come as a surprise at this point, 
we emphasize that this has not been
proven before. Although \citet{Gruen12} establish 
a connection between the \emph{empirical} regularized solution $\empregF$
and a version of the \emph{empirical} conditional mean embedding
with a \emph{rescaled regularization parameter}, a
population analogue was never derived. A simple asymptotic argument
via convergence in the infinite-data limit 
is hampered by the rescaling of the regularization
parameter in this derivation. 
Interestingly, the population expression of $\regA$ 
which we derive here is sometimes
taken for granted in the literature (see for example \citet{Fukumizu13:KBR}),
even if it was never proven in the original work.

Our analysis offers a view on the beautiful duality between
the generalized covariance operator $\vecrkhscov$ acting on $\vecrkhs$,
composition operators acting on $\HS{\inrkhs}$ and the 
kernel covariance operator $\cov[XX]$.

\begin{remark}
	While our analysis is purely aimed at a theoretical understanding at this point,
	we expect that the following results will have a practical
	benefit, as they allow an asymptotic discussion
	of the spectral properties of the given estimates 
	(see also Section~\ref{sec:markov_operators}).
\end{remark}

For an operator $B \in \bounded{\inrkhs}$,
define the \emph{right-composition operator} 
\begin{align}
	\comp_B \colon \HS{\inrkhs} &\rightarrow \HS{\inrkhs}, \\
	A &\mapsto AB.
\end{align}
It is easy to see that $\comp_B$ is a
well-defined bounded operator since $\HS{\inrkhs}$ is an ideal in 
$\bounded{\inrkhs}$ and we have 
$\norm{\comp_BA}_{\HS{\inrkhs}} \leq \norm{A}_{\HS{\inrkhs}} \norm{B}$.
Furthermore, if $B$ is invertible then $\comp_B$ is invertible and we have
$\comp_{B^{-1}} = \comp^{-1}_B$.

The following result describes the connection between $\vecrkhs$ and $\cov[XX]$
in terms of the composition operator $\comp_{\cov[XX]}$. In fact,
it shows that $\vecrkhscov \colon \vecrkhs \rightarrow \vecrkhs$ 
describes exactly the action of $\comp_{\cov[XX]} \colon \HS{\inrkhs}  \rightarrow\HS{\inrkhs} $ 
under the isomorphism $\Theta \colon \HS{\inrkhs} \rightarrow \vecrkhs$.

%

\begin{figure}[htb]
	\centering
	\begin{subfigure}{.49\textwidth}
		\centering
		\begin{tikzcd}[column sep=large, row sep =large]
			\vecrkhs \arrow[d, "\vecrkhscov"', shift right] \arrow[r, "\Theta^{-1}"] & \HS{\inrkhs} 
			\arrow[d, "\comp_{ \cov[XX] }"] \\
			 \vecrkhs \arrow[r, "\Theta^{-1}"']                            & \HS{\inrkhs}             
		\end{tikzcd}
	\end{subfigure}
	\vspace{1em}
	\begin{subfigure}{.49\textwidth}
		\centering
		\begin{tikzcd}[column sep=large, row sep =large]
			\vecrkhs \arrow[d, "\vecrkhscov + \reg \id_\vecrkhs"', shift right] 
			\arrow[r, "\Theta^{-1}"] & \HS{\inrkhs}  \arrow[d, "\comp_{ \cov[XX] + \reg \id_{\inrkhs} }"] \\
			\vecrkhs \arrow[r, "\Theta^{-1}"']                            & \HS{\inrkhs}               
		\end{tikzcd}
	\end{subfigure}	
	
	\caption{Correspondence of $\vecrkhscov$ 
		and $\comp_{\cov[XX]}$.} \label{fig:reg_vecrkhscov}
\end{figure} 

\begin{theorem}
	\label{thm:comp_operator}
	Let $F \in \vecrkhs$ 
	and $A := \Theta^{-1}(F) \in \HS{\inrkhs}$.
	Then the diagrams in Figure \ref{fig:reg_vecrkhscov}
	are both commutative diagrams, i.e., we have
	\begin{equation*}
		\Theta^{-1}(\vecrkhscov F) = 
		A \cov[XX]
	\end{equation*}
	as well as
	\begin{equation*}
		\Theta^{-1}[(\vecrkhscov + \reg  \id_\vecrkhs) F] =
		A (\cov[XX] + \reg \id_\inrkhs).
	\end{equation*}
\end{theorem}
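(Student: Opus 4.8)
The plan is to compute $\Theta^{-1}(\vecrkhscov F)$ directly by unwinding the definition of $\vecrkhscov$ and exploiting that $\Theta$ is an isometric isomorphism. First I would rewrite the defining integral~\eqref{eq:generalized_covariance_operator} using the operator reproducing property from Corollary~\ref{cor:vecrkhs_isomorphism}, namely $K_x^* F = F(x) = A\varphi(x)$, to obtain
\begin{equation*}
	\vecrkhscov F = \int_E K_x K_x^* F \, \dd \invmeas(x) = \int_E K_x \bigl( A \varphi(x) \bigr) \, \dd \invmeas(x).
\end{equation*}
The key observation is that the operator $K_x \colon \inrkhs \to \vecrkhs$ is itself the image under $\Theta$ of a rank-one tensor: for any $h \in \inrkhs$ one checks from the defining relation~\eqref{eq:vecrkhs_isomorphism} together with the symmetry $k(x,y) = k(y,x)$ that $K_x h = \Theta(h \otimes \varphi(x))$, since both sides evaluate at $y \in E$ to $k(x,y)\,h$.

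With this identity in hand, I would pull $\Theta$ out of the Bochner integral. Since $\Theta$ is bounded (indeed isometric), it commutes with Bochner integration, giving
\begin{equation*}
	\vecrkhscov F = \int_E \Theta\bigl( (A\varphi(x)) \otimes \varphi(x) \bigr) \, \dd \invmeas(x) = \Theta\left( \int_E (A\varphi(x)) \otimes \varphi(x) \, \dd \invmeas(x) \right),
\end{equation*}
so that $\Theta^{-1}(\vecrkhscov F) = \int_E (A\varphi(x)) \otimes \varphi(x) \, \dd \invmeas(x)$. The next step is the elementary rank-one identity $(A\varphi(x)) \otimes \varphi(x) = A\,(\varphi(x) \otimes \varphi(x))$, which is immediate from the interpretation of $u \otimes v$ as the operator $\tilde{x} \mapsto \innerprod{\tilde{x}}{v}_\inrkhs u$. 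Pulling the bounded operator $A$ out of the integral (the integral defining $\cov[XX]$ converges in trace norm by Assumption~\ref{ass:second_moment}, and left composition with $A$ is continuous on the Schatten classes) then yields
\begin{equation*}
	\Theta^{-1}(\vecrkhscov F) = A \int_E \varphi(x) \otimes \varphi(x) \, \dd \invmeas(x) = A\, \cov[XX],
\end{equation*}
which is the first commutative diagram. The second statement follows at once by linearity of $\Theta^{-1}$: since $\Theta^{-1}(F) = A$, we obtain $\Theta^{-1}[(\vecrkhscov + \reg \id_\vecrkhs)F] = A\cov[XX] + \reg A = A(\cov[XX] + \reg \id_\inrkhs)$.

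The main obstacle is one of rigor rather than ideas: one must justify interchanging both $\Theta$ and the left-composition with $A$ with the respective Bochner integrals. This rests on the boundedness of $\Theta$ and $A$ together with the trace-norm (equivalently, here, Hilbert--Schmidt-norm) convergence of the integrals, the rank-one integrand having norm bounded by $\norm{A}\,\norm{\varphi(x)}_\inrkhs^2$ with $\int_E \norm{\varphi(x)}_\inrkhs^2 \, \dd\invmeas(x) < \infty$. I would also take care with the tensor-versus-operator bookkeeping, since $\Theta$ is defined on $\inrkhs \otimes \inrkhs$ while $A$ lives in $\HS{\inrkhs}$, and the two are identified throughout via the isometry $\HS{\inrkhs} \simeq \inrkhs \otimes \inrkhs$.
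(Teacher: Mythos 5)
Your proof is correct, and it reaches the paper's conclusion by a genuinely different mechanism than the one the paper uses. The paper makes the same substitution $F(x) = A\varphi(x)$ into the defining integral, but then computes the function $\vecrkhscov F$ \emph{pointwise}: for each fixed argument it commutes $A$ with an $\inrkhs$-valued Bochner integral (justified by the $L^1$-integrability of $x \mapsto k(\cdot,x)\varphi(x)$ via H\"older and Assumption~\ref{ass:second_moment}), recognizes $\int_E k(\cdot,x)\varphi(x)\,\dd\invmeas(x) = \cov[XX]\varphi(\cdot)$, and then concludes $\Theta^{-1}(\vecrkhscov F) = A\cov[XX]$ by invoking the \emph{uniqueness} clause of Corollary~\ref{cor:vecrkhs_isomorphism}, since $A\cov[XX] \in \HS{\inrkhs}$ satisfies the operator reproducing property for $\vecrkhscov F$. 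You instead stay entirely on the tensor side: the observation $K_x h = \Theta(h \otimes \varphi(x))$ lets you commute the isometry $\Theta$ with the Bochner integral in $\HS{\inrkhs}$-norm, after which the rank-one identity $(A\varphi(x))\otimes\varphi(x) = A\,(\varphi(x)\otimes\varphi(x))$ and boundedness of left-composition by $A$ finish the computation, with no pointwise evaluation and no appeal to uniqueness. Your route keeps all the integrability bookkeeping in a single norm (dominated by $\norm{A}\,\norm{\varphi(x)}_\inrkhs^2$) and makes the commutativity of the diagram literally a statement about commuting $\Theta$ with an integral; the paper's route has the advantage of exhibiting the explicit function identity $\vecrkhscov F = A\cov[XX]\varphi(\cdot)$, which it reuses verbatim (with $\empvecrkhscov$ and $\empcov[XX]$ in place of $\vecrkhscov$ and $\cov[XX]$) to obtain the empirical analogue in Theorem~\ref{thm:empirical_regularized_operator_solution}. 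Your treatment of the second assertion, via linearity of $\Theta^{-1}$, is also cleaner than the paper's second pointwise computation, and both are correct.
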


\begin{proof}
	Let $F \in \vecrkhs$ and $A = \Theta^{-1} (F) \in \HS{\inrkhs}$. We have 
	$F(\cdot) = A\varphi(\cdot)$ by Corollary \ref{cor:vecrkhs_isomorphism}.
	From the definition of $\vecrkhs$, we get
	\begin{align*}
		T F &= \int_E K_x F(x) \dd \invmeas(x) = \int_E K_x [ A \varphi(x) ] \dd \invmeas(x) \\
		&=  \int_E   A [ k(\cdot, x ) \varphi(x) ] \dd \invmeas(x) 
		= A \int_E   k(\cdot, x ) \varphi(x)  \dd \invmeas(x) \\
		&= A  \int_E   [\varphi(x) \otimes \varphi(x)] \varphi(\cdot) \dd \invmeas(x) 
		= A \cov[XX] \varphi(\cdot),
	\end{align*}
	where we use the fact that for every fixed $x' \in E$, 
	the map $x \mapsto k(x',x) \varphi(x)$ is an element of  
	$L^1(E, \sigalg_{E}, \invmeas; \inrkhs) $ due to Assumption \ref{ass:second_moment} and
	Hölder's inequality. Because of this,
	the integration and the operator $A$ commute \citep[Chapter II.2, Theorem 6]{DiestelUhl1977}.
	The operator $A \cov[XX]$ is Hilbert--Schmidt and $TF = A \cov[XX] \varphi(\cdot)$
	confirms the operator reproducing property under $\Theta^{-1}$ from
	Corollary~\ref{cor:vecrkhs_isomorphism}, hence we have $\Theta^{-1}(\vecrkhscov F) = A \cov[XX]$.
	Using this fact, we obtain
	\begin{align*}
		(\vecrkhscov + \reg \id_\vecrkhs)F =  A \cov[XX] \varphi(\cdot) +  \reg A \varphi(\cdot) 
		= A(\cov[XX] + \reg \id_\inrkhs) \varphi(\cdot),
	\end{align*} confirming the same relation for the second assertion of the theorem.
\end{proof}

Theorem~\ref{thm:comp_operator} allows us to easily derive
the expression for the
Tikhonov--Phillips estimate $\regF$ under $\Theta^{-1}$ 
in terms of its corresponding operator in $\HS{\inrkhs}$
in terms of $\cov[XX]$ and $\cov[YX]$.

\begin{corollary}[Closed form analytical operator solution]
	\label{cor:regularized_operator_solution}
	We have 
	\begin{equation*}
		\Theta^{-1}(\regF) = \regA^* = \cov[YX]  ( \cov[XX] + \reg \id_\inrkhs)^{-1},
	\end{equation*}
	i.e., 
	the analytical regularized operator solution can be represented as
	\begin{equation}
		\Theta^{-1}(\regF)^*  = \regA = ( \cov[XX] + \reg \id_\inrkhs)^{-1} \cov[XY].
	\end{equation}
\end{corollary}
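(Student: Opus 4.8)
The plan is to transport the closed form $\regF = (\vecrkhscov + \reg\id_\vecrkhs)^{-1}\vecrkhsi_\invmeas^*\Fstar$ through the isomorphism $\Theta$ term by term, using Theorem~\ref{thm:comp_operator} to handle the inverse and a direct computation to identify the image of the right-hand side. Theorem~\ref{thm:comp_operator} already describes $\vecrkhscov + \reg\id_\vecrkhs$ on $\vecrkhs$ as the right-composition $\comp_{\cov[XX]+\reg\id_\inrkhs}$ on $\HS{\inrkhs}$, so the one genuinely new ingredient is the identity $\Theta^{-1}(\vecrkhsi_\invmeas^*\Fstar) = \cov[YX]$, which I would prove first.

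To this end, observe that $\vecrkhsi_\invmeas^*\Fstar \in \vecrkhs$, so by the operator reproducing property (Corollary~\ref{cor:vecrkhs_isomorphism}) it is enough to produce an operator $B \in \HS{\inrkhs}$ with $[\vecrkhsi_\invmeas^*\Fstar](x) = B\varphi(x)$ for every $x \in E$; uniqueness then identifies $B$ with $\Theta^{-1}(\vecrkhsi_\invmeas^*\Fstar)$. Inserting $\veckernel(x,x') = k(x,x')\id_\inrkhs$ into the integral formula for $\vecrkhsi_\invmeas^*$ and writing $k(x,x') = \innerprod{\varphi(x)}{\varphi(x')}_\inrkhs$ gives
\[ [\vecrkhsi_\invmeas^*\Fstar](x) = \int_E k(x,x')\Fstar(x')\,\dd\invmeas(x') = \Ex[k(x,X)\Fstar(X)]. \]
On the other hand, the rank-one action of $\cov[YX] = \Ex[\varphi(Y)\otimes\varphi(X)]$ yields $\cov[YX]\varphi(x) = \Ex[k(x,X)\varphi(Y)]$, and conditioning on $X$ together with $\Fstar(X) = \Ex[\varphi(Y)\mid X]$ gives $\Ex[k(x,X)\varphi(Y)] = \Ex[k(x,X)\Fstar(X)]$. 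Hence $[\vecrkhsi_\invmeas^*\Fstar](x) = \cov[YX]\varphi(x)$; since $\cov[YX]$ is trace class and therefore Hilbert--Schmidt, Corollary~\ref{cor:vecrkhs_isomorphism} gives $\Theta^{-1}(\vecrkhsi_\invmeas^*\Fstar) = \cov[YX]$.

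It remains to invert the composition relation. For $\reg > 0$ the operator $\cov[XX] + \reg\id_\inrkhs \succeq \reg\id_\inrkhs$ is boundedly invertible, so $\comp_{\cov[XX]+\reg\id_\inrkhs}$ is invertible with inverse $\comp_{(\cov[XX]+\reg\id_\inrkhs)^{-1}}$. Reading the second diagram of Theorem~\ref{thm:comp_operator} as the operator identity $\Theta^{-1}\circ(\vecrkhscov+\reg\id_\vecrkhs) = \comp_{\cov[XX]+\reg\id_\inrkhs}\circ\Theta^{-1}$ and using that $\Theta$ is an isomorphism to invert both sides, I would obtain $\Theta^{-1}\circ(\vecrkhscov+\reg\id_\vecrkhs)^{-1} = \comp_{(\cov[XX]+\reg\id_\inrkhs)^{-1}}\circ\Theta^{-1}$. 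Applying this to $\vecrkhsi_\invmeas^*\Fstar$ and using the first step gives
\[ \Theta^{-1}(\regF) = \comp_{(\cov[XX]+\reg\id_\inrkhs)^{-1}}(\cov[YX]) = \cov[YX](\cov[XX]+\reg\id_\inrkhs)^{-1} = \regA^*, \]
and taking adjoints, using that $(\cov[XX]+\reg\id_\inrkhs)^{-1}$ is self-adjoint and $\cov[YX]^* = \cov[XY]$, yields $\regA = (\cov[XX]+\reg\id_\inrkhs)^{-1}\cov[XY]$. I expect the only delicate point to be the first step: justifying the passage $\int_E k(x,x')\Fstar(x')\,\dd\invmeas(x') = \Ex[k(x,X)\Fstar(X)]$ together with the interchange with the conditional expectation, which requires that $x'\mapsto k(x,x')\Fstar(x')$ be Bochner integrable. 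This follows from Hölder's inequality exactly as in the proof of Theorem~\ref{thm:comp_operator}; the rest is routine bookkeeping through $\Theta$.
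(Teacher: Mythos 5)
Your proposal is correct and follows essentially the same route as the paper: both establish the key identity $\Theta^{-1}(\vecrkhsi_\invmeas^*\Fstar) = \cov[YX]$ via the operator reproducing property (the paper unfolds $\Fstar$ through the Markov-kernel disintegration where you use the tower property of conditional expectation — the same computation in different notation), and both then transport $(\vecrkhscov+\reg\id_\vecrkhs)^{-1}$ through $\Theta$ as $\comp_{(\cov[XX]+\reg\id_\inrkhs)^{-1}}$ using Theorem~\ref{thm:comp_operator} together with $\comp_{B^{-1}}=\comp_B^{-1}$. Your version is in fact slightly more explicit than the paper's on the inversion step and on why $\cov[YX]\in\HS{\inrkhs}$, but there is no substantive difference.
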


\begin{proof}
	By definition, we have
	$
	\regF = g_\reg(\vecrkhscov) \vecrkhsi_\invmeas^* \Fstar 
	= (\vecrkhscov + \reg \id_\vecrkhs)^{-1} \vecrkhsi_\invmeas^* \Fstar$.
	We can rearrange
	\begin{align*}
		\vecrkhsi_\invmeas^* \Fstar &= \int_E K(\cdot, x) \Fstar(x) \dd \invmeas(x) 
		= \int_E k(\cdot, x) \int_E \varphi(y) p(x, \dd y) \dd \invmeas(x) \\
		&=\iint_{ E^2 } \varphi(y) \innerprod{\varphi(x)}{\varphi(\cdot)}_\inrkhs 
		p(x, \dd y) \dd \invmeas(x) \\
		&= \left[ \int_E \varphi(Y) \otimes \varphi(X) \dd \Pr  \right] \varphi(\cdot)
		=\cov[YX] \varphi(\cdot).
	\end{align*}
	We have thus shown that
	$ \cov[YX] = \Theta^{-1} (\vecrkhsi_\invmeas^* \Fstar)$ 
	by the operator reproducing property 
	from Corollary~\ref{cor:vecrkhs_isomorphism}.
	Theorem~\ref{thm:comp_operator} implies that
	the operator $(\vecrkhscov + \reg \id_\vecrkhs)^{-1}$ acting on $\vecrkhs$ 
	may be represented under $\Theta^{-1}$ 
	as by the right composition operator 
	$\comp_{(\cov[XX] + \reg \id_\inrkhs)^{-1}}$ acting on on $\HS{\inrkhs}$, 
	leading to
	\begin{equation*}
		\Theta^{-1}(\regF) = 
		\comp_{(\cov[XX] + \reg \id_\inrkhs)^{-1}} \cov[YX]
		= \cov[YX](\cov[XX] + \reg \id_\inrkhs)^{-1}
	\end{equation*}
as claimed.
\end{proof}

Analogously we obtain a closed form representation
for the empirical regularized operator solution
$\empregA$, in terms of the emprical covariance operators 
\begin{equation*}
	\empcov[XX] := \frac{1}{n} \sum_{i=1}^n \varphi(X_i) \otimes \varphi(X_i)
	\textnormal{ and } \empcov[XY] :=
	\frac{1}{n} \sum_{i=1}^n \varphi(Y_i) \otimes \varphi(X_i).
\end{equation*}

\begin{theorem}[Closed form empirical operator solution]
	\label{thm:empirical_regularized_operator_solution}
	We have 
	\begin{equation*}
		\Theta^{-1}(\empregF) = \empregA^* 
		= \empcov[YX]  ( \empcov[XX] + \reg \id_\inrkhs)^{-1},
	\end{equation*}
	i.e., 
	the empirical regularized operator solution can be represented as
	\begin{equation}
		\Theta^{-1}(\empregF)^*  = \empregA = ( \empcov[XX] + \reg \id_\inrkhs)^{-1} \empcov[XY].
	\end{equation}
\end{theorem}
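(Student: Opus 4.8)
The plan is to follow the proof of Corollary~\ref{cor:regularized_operator_solution} almost verbatim, replacing the integral $\int_E \cdots \dd\invmeas$ by the empirical average $\tfrac1n\sum_{i=1}^n$ and the population covariance operators by their empirical versions $\empcov[XX]$ and $\empcov[YX]$. Two ingredients are needed: an empirical analogue of Theorem~\ref{thm:comp_operator}, and the identification $\Theta^{-1}(\samplingop^*\Upsilon)=\empcov[YX]$.

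First I would establish the empirical analogue of Theorem~\ref{thm:comp_operator}. Take $F\in\vecrkhs$ and $A:=\Theta^{-1}(F)$, so that $F(\cdot)=A\varphi(\cdot)$ by Corollary~\ref{cor:vecrkhs_isomorphism}. Evaluating pointwise and pulling the bounded operator $A$ out of the finite sum gives
\[
  [\empvecrkhscov F](z)=\frac1n\sum_{i=1}^n k(z,X_i)\,A\varphi(X_i)
  =A\,\frac1n\sum_{i=1}^n[\varphi(X_i)\otimes\varphi(X_i)]\varphi(z)
  =A\,\empcov[XX]\varphi(z),
\]
using $k(z,X_i)=\innerprod{\varphi(X_i)}{\varphi(z)}_\inrkhs$. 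By the operator reproducing property this reads $\Theta^{-1}(\empvecrkhscov F)=A\,\empcov[XX]$, and adding $\reg F$ yields $\Theta^{-1}((\empvecrkhscov+\reg\id_\vecrkhs)F)=A(\empcov[XX]+\reg\id_\inrkhs)$; that is, $\empvecrkhscov+\reg\id_\vecrkhs$ corresponds under $\Theta$ to the right-composition operator $\comp_{\empcov[XX]+\reg\id_\inrkhs}$. (Equivalently, one may simply invoke Theorem~\ref{thm:comp_operator} for the empirical measure $\tfrac1n\sum_{i=1}^n\delta_{X_i}$, which trivially satisfies Assumptions~\ref{ass:separability}--\ref{ass:second_moment}.) Since $\empcov[XX]+\reg\id_\inrkhs$ is boundedly invertible for $\reg>0$, the same holds for this composition operator with inverse $\comp_{(\empcov[XX]+\reg\id_\inrkhs)^{-1}}$, and hence $(\empvecrkhscov+\reg\id_\vecrkhs)^{-1}$ corresponds to $\comp_{(\empcov[XX]+\reg\id_\inrkhs)^{-1}}$ under $\Theta$.

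Next I would compute the right-hand side. From $\samplingop^*\Upsilon=\tfrac1n\sum_{i=1}^n K_{X_i}\varphi(Y_i)$ the same pointwise manipulation gives
\[
  [\samplingop^*\Upsilon](z)=\frac1n\sum_{i=1}^n k(z,X_i)\,\varphi(Y_i)
  =\frac1n\sum_{i=1}^n[\varphi(Y_i)\otimes\varphi(X_i)]\varphi(z)
  =\empcov[YX]\varphi(z),
\]
so that $\Theta^{-1}(\samplingop^*\Upsilon)=\empcov[YX]$ by the operator reproducing property. Combining the two ingredients with $\empregF=(\empvecrkhscov+\reg\id_\vecrkhs)^{-1}\samplingop^*\Upsilon$ then yields
\[
  \Theta^{-1}(\empregF)=\comp_{(\empcov[XX]+\reg\id_\inrkhs)^{-1}}\empcov[YX]
  =\empcov[YX](\empcov[XX]+\reg\id_\inrkhs)^{-1}=\empregA^*,
\]
and taking adjoints, using self-adjointness of $\empcov[XX]+\reg\id_\inrkhs$ and $\empcov[YX]^*=\empcov[XY]$, gives $\empregA=(\empcov[XX]+\reg\id_\inrkhs)^{-1}\empcov[XY]$.

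I expect no genuine obstacle here: the delicate points of the analytical proof --- invoking Hölder's inequality and the Diestel--Uhl theorem to commute $A$ with Bochner integration --- disappear entirely, since a finite sum commutes with a bounded operator automatically. The only steps demanding a little care are the inversion of the composition-operator identity and the final adjoint computation.
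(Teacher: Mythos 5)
Your proposal is correct and is essentially the paper's own proof: the paper argues precisely by replacing $\vecrkhscov$ with $\empvecrkhscov$ in the proof of Theorem~\ref{thm:comp_operator} and $\vecrkhsi_\invmeas^*$ with $\samplingop^*$ in the proof of Corollary~\ref{cor:regularized_operator_solution}, which is exactly your route. You merely write out the details the paper leaves as a sketch, including the accurate observation that the Bochner-integration technicalities (H\"older plus Diestel--Uhl) become trivial for finite sums.
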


Theorem~\ref{thm:empirical_regularized_operator_solution} can
be proven by simply replacing $\vecrkhscov$ with 
the sample-based operator $\empvecrkhscov$ in the proof of
Theorem~\ref{thm:comp_operator}, leading
to 
$
\Theta^{-1}[(\empvecrkhscov + \reg  \id_\vecrkhs) F] =
\Theta^{-1}(F) (\empcov[XX] + \reg \id_\inrkhs)$ for all 
$F \in \vecrkhs$.
Furthermore replacing
$\vecrkhsi_\invmeas^*$ with $\samplingop^*$ in the proof of
Corollary~\ref{cor:regularized_operator_solution}
yields $\Theta^{-1}(\samplingop^* \Upsilon) = \empcov[YX]$, 
thereby confirming 
the claim when applying both results to
$\empregA = \Theta^{-1}(\empregF ) = 
\Theta^{-1}[(\empvecrkhscov + \reg \id_\vecrkhs)^{-1} \samplingop^* \Upsilon]$.

%
%
%
%

\section{Application: kernel-based extended dynamic mode decomposition}
\label{sec:markov_operators}
	The derivation of the closed form for the regularized operator solution
	from the previous section allows to connect
	our theory to known spectral analysis techniques used in practice.
	
	\citet{Klus2019}
	and \citet{MollenhauerSVD2020} show that the eigenfunctions of
	the regularized empirical estimate 
	$\empregA = ( \empcov[XX] + \reg \id_\inrkhs)^{-1} \empcov[XY]$
	can be computed by solving a matrix eigenproblem.
	In the case
	that $\ko$ is
	the Markov transition operator from~\eqref{eq:markov_transition_operator}, 
	it is furthermore shown by \citet{Klus2019} that
	this empirical eigenproblem coincides exactly with the
	regularized eigenproblem given by the
	well-known kernel-based version of 
	\emph{extended dynamic mode decomposition} (EDMD)
	\citep{TRLBK14,WKR15,WRK15:Kernel}.
	Hence, the asymptotic viewpoint derived in our analysis proves
	that kernel EDMD essentially approximates
	$\ko: \inrkhs \rightarrow L^2(\invmeas)$
	in the infinite-sample limit with a suitable
	regularization scheme, thereby providing
	a statistical model for kernel EDMD.
	A theory of the spectral convergence of kernel EDMD
	could now be developed by investigating
	the spectral perturbation under the convergence
	$\norm{\empregA - \ko}_{\inrkhs \rightarrow L^2(\invmeas)} \to 0$
	for an admissible regularization scheme $\reg(n)$ and $n \to \infty$
	with suitable mixing assumptions of the underlying process
	along the lines of~\citet{Mollenhauer2020Autocovariance}.
	In particular, our approximation results from Section~\ref{sec:nonparametric_approximation}
	may be used to show that kernel EDMD overcomes the weak spectral
	convergence of standard EDMD which was
	proven by~\citet{KoMe18}.
	The details of this theory are not in the scope of this work
	and are subject to further research.

\section{Outlook}
\label{sec:outlook}

This work provides the theoretical framework for the nonparametric
approximation of the conditional expectation operator
$\ko$ over the RKHS embedded in its domain $\inrkhs \subset L^2(\nu)$ 
from an approximation viewpoint.
As a core result, we prove that convergence
takes place in the operator norm with respect to
the RKHS $\inrkhs$, 
therefore allowing for a stronger mode of convergence than
classically used numerical projection methods.

We establish the connection to
recent topics in statistical learning theory, in particular
least squares regression problems with vector-valued kernels and the
maximum mean discrepancy. These connections may allow
to extend our theory to practical applications such as nonparametric hypothesis tests
for Markov kernels.

In the case that $\ko$ is a Markov transition operator, our analysis
provides a statistical model for kernel-based EDMD.
However, in this case there remain open questions from
a theoretical perspective.
In particular, 
\begin{enumerate}[label=(\roman*)]
	\item convergence behaviour of the estimators need to
	be derived in terms of properties of the underlying Markov process
	such as a spectral gap, ergodicity rates and mixing;
	\item a spectral analysis of the estimators is needed in the context
	of classical perturbation theory in order
	to understand details of the spectral convergence.
\end{enumerate}

\section*{Acknowledgements}
	We greatfully acknowledge funding from Germany’s  
    Excellence  Strategy  (MATH+: The Berlin Mathematics 
    Research Center, EXC-2046/1, project AA1-2).
    The authors wish to thank Pietro Novelli and Vladimir Kostic
    for making us aware of an error in the original version
    of the manuscript.

\bibliographystyle{abbrvnat}

\bibliography{library}

\appendix

\section{}

It is well-known that for
$F \in L^2(E, \sigalg_E, \invmeas; \inrkhs)$,
the standard least squares
risk 
\begin{equation*}
	\risk(F) := \Ex \left[ 
	\norm{ \varphi(Y) - F(X) }_\inrkhs^2  \right],
\end{equation*}
can be rewritten in terms of the  \emph{regression function} $\Fstar$.
We report the proof here for completeness.

\begin{theorem}[Risk and regression function]
	\label{thm:risk_reformulation}
	Under Assumptions~\ref{ass:separability}--\ref{ass:second_moment},
	the risk $\risk$
	can equivalently be rewritten as
	\begin{equation}
		\risk(F) = \norm{ F - \Fstar }^2_{L^2(E, \sigalg_E, \invmeas; \inrkhs)}
		+ \risk(\Fstar)
	\end{equation}
	for all $F \in L^2(E, \sigalg_E, \invmeas; \inrkhs)$.
\end{theorem}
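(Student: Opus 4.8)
The plan is to establish the classical bias--variance (Pythagorean) decomposition by disintegrating the risk along the Markov kernel $\tk$ and exploiting the defining property of the regression function, namely $\int_E \varphi(y)\,\tk(x,\dd y) = \Fstar(x)$ for $\invmeas$-a.e.\ $x$. First I would record that under Assumption~\ref{ass:second_moment} both $\risk(F)$ and $\risk(\Fstar)$ are finite: indeed $\Ex[\norm{\varphi(Y)}_\inrkhs^2] < \infty$, while $\Fstar \in L^2(E,\sigalg_E,\invmeas;\inrkhs)$ (as shown before the statement) and $F \in L^2(E,\sigalg_E,\invmeas;\inrkhs)$ by hypothesis, so the triangle inequality in $L^2$ yields $\risk(F),\risk(\Fstar) < \infty$. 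This finiteness is what legitimises the algebraic manipulations below (ruling out any $\infty - \infty$ ambiguity) and, together with Tonelli's theorem, the splitting of the forthcoming double integral.

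The key computation is to write the risk as the iterated integral
\begin{equation*}
	\risk(F) = \int_E \int_E \norm{\varphi(y) - F(x)}_\inrkhs^2 \, \tk(x, \dd y)\, \dd\invmeas(x),
\end{equation*}
and, for each fixed $x$, to insert $\pm \Fstar(x)$ inside the norm and expand:
\begin{equation*}
	\norm{\varphi(y) - F(x)}_\inrkhs^2 = \norm{\varphi(y) - \Fstar(x)}_\inrkhs^2 + 2\innerprod{\varphi(y) - \Fstar(x)}{\Fstar(x) - F(x)}_\inrkhs + \norm{\Fstar(x) - F(x)}_\inrkhs^2.
\end{equation*}
Integrating in $y$ against $\tk(x,\cdot)$, the first summand yields the inner integral of $\risk(\Fstar)$ and the third is constant in $y$ and reproduces the bias term, while for the cross term I would pull the fixed vector $\Fstar(x) - F(x) \in \inrkhs$ out of the integral using the fact that the bounded linear functional $\innerprod{\cdot}{\Fstar(x) - F(x)}_\inrkhs$ commutes with the Bochner integral. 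This leaves $2\innerprod{\int_E (\varphi(y) - \Fstar(x))\,\tk(x,\dd y)}{\Fstar(x) - F(x)}_\inrkhs$, whose left entry vanishes precisely because $\int_E \varphi(y)\,\tk(x,\dd y) = \Fstar(x)$. Hence the cross term is zero for $\invmeas$-a.e.\ $x$, and integrating the two surviving terms over $x$ against $\invmeas$ delivers exactly $\risk(F) = \norm{F - \Fstar}_{L^2(E,\sigalg_E,\invmeas;\inrkhs)}^2 + \risk(\Fstar)$.

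The main obstacle is purely measure-theoretic rather than conceptual: one must justify the interchange of integration in the cross term and the splitting of the double integral. For the former, the crucial point is that for each fixed $x$ the map $y \mapsto \varphi(y) - \Fstar(x)$ lies in $L^1(E,\sigalg_E,\tk(x,\cdot);\inrkhs)$ (by Assumption~\ref{ass:second_moment} and Jensen's inequality for the conditional Bochner expectation, exactly as in the derivation of $\Fstar \in L^2$), so that the commutation of a bounded functional with the Bochner integral applies~\citep[Chapter~II.2, Theorem~6]{DiestelUhl1977}. For the latter, the finiteness from the first step together with Cauchy--Schwarz shows each summand is jointly integrable against $\tk(x,\dd y)\,\dd\invmeas(x)$, so Tonelli's theorem permits the iterated evaluation. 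An equivalent route avoids the explicit kernel by conditioning on $X$ and invoking the pull-out property $\Ex[\innerprod{\varphi(Y) - \Fstar(X)}{\Fstar(X) - F(X)}_\inrkhs \mid X] = \innerprod{\Ex[\varphi(Y) \mid X] - \Fstar(X)}{\Fstar(X) - F(X)}_\inrkhs = 0$ of Hilbert-space-valued conditional expectations, but I prefer the disintegration, since there the vanishing of the cross term reduces to the elementary fact that a constant factors out of an integral.
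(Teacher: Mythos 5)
Your proposal is correct and follows essentially the same route as the paper's proof: insert $\pm\Fstar$, expand the square, and kill the cross term via the disintegration through the Markov kernel $\tk$ together with the defining property $\Fstar(x) = \int_E \varphi(y)\,\tk(x,\dd y)$. The only difference is cosmetic --- you disintegrate the whole risk first and expand pointwise, whereas the paper expands under $\Ex$ and disintegrates only the cross term --- plus your added measure-theoretic bookkeeping (finiteness, Tonelli/Fubini, commuting the bounded functional with the Bochner integral), which the paper leaves implicit.
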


\begin{proof}
	We have
	\begin{align*} 
		\risk(F)
		&= \Ex \left[\norm{ \varphi(Y) - F(X) }_\inrkhs^2  \right]
		\\
		&=
		\Ex \left[
		\norm{ \varphi(Y) - \Fstar(X) + \Fstar(X) - F(X) }_\inrkhs^2  \right]
		\\
		&=
		\Ex \left[
		\norm{ \varphi(Y) - \Fstar(X) }_\inrkhs^2
		\right]
		\\
		&+ 2
		\Ex \left[
		\innerprod{\varphi(Y) - \Fstar(X)}{\Fstar(X) - F(X)}_\inrkhs
		\right]
		\\
		&+ 
		\Ex\left[ \norm{ F(X) - \Fstar(X) }^2_\inrkhs \right],
	\end{align*}
	where we see that the first summand equals to
	$\risk(\Fstar)$.
	The second summand which contains the mixed terms
	vanishes since we have
	\begin{align*}
		\Ex \left[
		\innerprod{\varphi(Y) - \Fstar(X)}{\Fstar(X) - F(X)}_\inrkhs
		\right] 
		\\
		= \int_E
		\innerprod{ 
			\underbrace{\int_E \varphi(y) \, \tk(x, \dd y) }_{ = \Fstar(x) }
			- \Fstar(x) }{ \Fstar(x) - F(x)}\strut_{\!\inrkhs} \, \dd \invmeas(x).
	\end{align*}
	The last summand can be rewritten as
	\begin{equation*}
		\Ex\left[ \norm{ F(X) - \Fstar(X) }^2_\inrkhs \right] 
		=
		\norm{ F - \Fstar }^2_{L^2(E, \sigalg_E, \invmeas; \inrkhs)}
	\end{equation*}
	by change of measure,
	proving the assertion.
\end{proof}

\end{document}